\documentclass[12pt,leqno]{article}
\usepackage[a4paper, margin=1in]{geometry}
\usepackage{amsmath, amsthm, amsfonts, amssymb}
\usepackage{mathrsfs}
\usepackage{enumitem}
\usepackage{bm}
\usepackage[misc]{ifsym}

\newtheorem{thm}{Theorem}[section]
\newtheorem{cor}[thm]{Corollary}
\newtheorem{lem}[thm]{Lemma}
\newtheorem{prp}[thm]{Proposition}

\theoremstyle{definition}
\newtheorem{rem}[thm]{Remark}
\newtheorem{defn}{Definition}[section]

\numberwithin{equation}{section} 

\allowdisplaybreaks

\newcommand{\scr}[1]{\mathscr #1}

\def\eins{\boldsymbol 1}
\def\RR{\mathbb R} 
\def\NN{\mathbb N} 
\def\WW{\mathbb W}

\def\EE{\mathbb E}

\def\E{\scr E}
\def\D{\scr D}
\def\B{\scr B}
\def\F{\scr F}
\def\P{\scr P}
\def\N{\scr N}

\def\lam{\lambda}
\def\Lam{\Lambda}

\def\na{\nabla}
\def\la{\langle}
\def\ra{\rangle}

\def\d{\textnormal {d}}

\def\id{\textnormal{id}}

\def\tr{\textnormal {T}}
\def\tra{\textnormal {tr}}

\def\euc{\textnormal{euc}}
\def\hes{\na^{2,\euc}}
\def\ac{\textnormal {ac}}
\def\div{\textnormal{div}}

\title{The Ornstein--Uhlenbeck process on $\scr P_2$ \\with a volatility operator}
\author{Martin Grothaus$^{(a)}$ and Simon Wittmann$^{(b)}$\footnote{ Corresponding author\quad \Letter \;\textit{ simon.wittmann@pwr.edu.pl}}}

\begin{document}

\maketitle

\begin{abstract}
	We analyze a diffusion ${(\bm \mu_t)}_{t\geq 0}$ on the $2$-Wasserstein space $\scr P_2$ over $\RR^d$ for which
	\begin{equation*}
		|\bm \mu_t|_2^2-|\bm \mu_0|_2^2-2ct+2\int_0 ^t|\bm \mu_s|_2^2\,\d s,\qquad t\geq 0,
	\end{equation*}
	is a martingale, where the constant $c\in(0,\infty)$ equals the trace of a volatility operator on a Hilbert space and $|\bm \mu_t|_2:=(\int_{\RR^d}x^\tr x\mu_t(\d x ))^{1/2}$. 
	The invariant measure of ${(\bm \mu_t)}_{t\geq 0}$ is a Gaussian on $\scr P_2$, as introduced by P.~Ren and F.-Y.~Wang.
	Moreover, the Dirichlet form and its generator are given explicitly on a dense subspace of $L^2$.
\end{abstract}

\noindent 2020 Mathematics subject classification: 60J25, 60J60, 60J46.\\
\noindent Keywords: Diffusion process, Wasserstein space, Dirichlet form, Ornstein--Uhlenbeck process, Gaussian measure

\footnotesize{\begin{itemize}\item[(a)] Department of Mathematics,  RPTU University Kaiserslautern-Landau,   67663 Kaiserslautern,  Germany
\item[(b)] Faculty of Mathematics, Wroc\l aw University of Science and Technology,   50-370 Wroc\l aw,  Poland\end{itemize}}


\section{Introduction}

The aim of this article is the definition and analysis of a probability-measure-valued diffusion process, whose generator can be identified as the generator of an Ornstein--Uhlenbeck type process.
The process ${(\bm\mu_t)}_{t\geq 0}$ we suggest as a candidate 
 has striking similarities with an Ornstein--Uhlenbeck process ${(X_t)}_{t\geq 0}$ on $\RR^d$, $d\in\NN$, and
lives on the $2$-Wasserstein space over $\RR^d$. A particularly comparable feature is the second moment, by which we mean the differential equation for the 
squared norm of $X_t$ respectively the second moment of $\bm\mu_t$.

Since at least 20 years, there has been an extensive discussion on the question of how a canonical diffusion process on the Wasserstein space could be defined (\cite{Sturm,Kon14,Sch,Del,St24}), to which there seems to be no ultimate answer. 
Thus it is  clear, that also the question about a natural notion for an Ornstein--Uhlenbeck process needs deep consideration and a careful analysis.

\newpage
\underline{The Ornstein--Uhlenbeck process on $\mathbb R^d$ with a volatility matrix}

We fix $d\in\NN$ and a symmetric, strictly positive definite (non-random) matrix  $\Sigma={(\Sigma_{k,l})}_{k,l\in\{1,\dots,d\}}\in\RR^{d\times d}$.
Let ${(X_t)}_{t\geq 0}$ be a $\RR^d$-valued stochastic process satisfying
\begin{equation}\label{eq:ex1}
	 X_t=-\int_0^tX_s\d s+ \sqrt {2\Sigma}\, W_t,\qquad t\geq 0,
\end{equation}
where ${( W_t)}_{t\geq 0}$
denotes a standard Brownian Motion on $\RR^d$ and $\sqrt {2\Sigma}$ the symmetric positive matrix with $(\sqrt {2\Sigma})^2=2\Sigma$.
Equation \eqref{eq:ex1} describes a  mean-zero  Ornstein--Uhlenbeck process whose volatility coefficient is the (constant, non-random) matrix $\sqrt {2\Sigma}$.

The component processes ${\{{( W_t^{\Sigma,k})}_{t\geq 0}\}}_{k=1,\dots,d}$ of ${( W^\Sigma_t)}_{t\geq 0}:={(\sqrt{\Sigma} W_t)}_{t\geq 0}$ are martingales with quadratic covariation processes
\begin{equation*}
	{\big\la  W_\bullet ^{\Sigma,k}, W_\bullet ^{\Sigma,l}\big\ra}_t=t\Sigma_{k,l},\qquad t\geq 0,\,k,l\in\{1,\dots, d\}.
\end{equation*}
The invariant measure of \eqref{eq:ex1} is the Gaussian measure 
\begin{equation*}
	\big((2\pi)^d\det (\Sigma)\big)^{-\frac{1}{2}}\exp\big(-\tfrac{1}{2} x^\tr \Sigma^{-1}x\big)\d x
\end{equation*}
with covariance operator $\Sigma$.
 For a function $\varphi\in C^2(\RR^d)$ the multi-dimensional Itô formula yields
\begin{equation}\label{eq:intro1}
	\d(\varphi\circ X_t)=\sum_{k=1}^d\Big[\Big(\sum_{l=1}^d\Sigma_{k,l}\partial_k\partial_l \varphi(X_s)\Big)-X_s^k\partial_k\varphi(X_s)\Big]\d s
+\sqrt 2\sum_{k=1}^d \partial_k\varphi(X_s) \d  W^{\Sigma, k }_s
\end{equation}
for $t\geq 0$, where ${\{{(X_t^k)}_{t\geq 0}\}}_{k=1,\dots,d}$ denote the component processes of ${(X_t)}_{t\geq 0}$.
 In particular, writing $|x|_\euc=\sqrt{x^\tr x}$ for $x\in\RR^d$ and choosing $\varphi(x):=|x|_\euc^2$ results in
\begin{equation}\label{eq:intro2}
	|X_t|_\euc^2=2\tra(\Sigma)t-2\int_0 ^t|X_s|_\euc^2\d s+2 \sqrt 2\int_{0}^tX_s^\tr \d  W^{\Sigma}_s,\qquad t\geq 0.
\end{equation}

\underline{The Ornstein--Uhlenbeck process on the Wasserstein space with volatility operator}

We write $\delta_x$ for the Dirac measure in a point $x\in\RR^d$. In this article, we define and analyze a diffusion ${(\bm\mu_t)}_{t\geq 0}$ on the $2$-Wasserstein space $\P_2$ over $\RR^d$,
whose drift and martingale parts are suitable generalizations compared to those of the Ornstein--Uhlenbeck process on $\RR^d$. In the drift part of \eqref{eq:intro1} respectively \eqref{eq:intro2}, i.e.
\begin{equation}\label{eq:driftA}
\int_0 ^t\int_{\RR^d}	\sum_{k=1}^d\Big[\Big(\sum_{l=1}^d\Sigma_{k,l}\partial_k\partial_l \varphi(x)\Big)-x_k\partial_k\varphi(x)\Big]{\delta}_{X_s}(\d x)\d s,\quad t\geq 0,
\end{equation}	
and 
\begin{equation}\label{eq:driftB}
	2\tra(\Sigma)t-2\int_0^t\int_{\RR^d}|x|_\euc^2\delta_{X_s}(\d x)\d s,\quad t\geq 0,
\end{equation}
we want to replace integration w.r.t.~${\delta_{X_s}}$ by integration w.r.t.~the time-dependent random measure ${\bm\mu_s}$
which comes from  the trajectories of a diffusion in $\scr P_2$ . To achieve this, the notion of $\Sigma$ has to be adapted.
In the example of the Ornstein--Uhlenbecck process \eqref{eq:ex1}, the parameter $\Sigma$ is chosen as any symmetric, strictly positive definite matrix.
For our diffusion  ${(\bm\mu_t)}_{t\geq 0}$ on $\P_2$, the parameter to choose is not a matrix, but an operator $A$ of the following type:
\begin{itemize}
	\item $A$ is a self-adjoint, strictly positive definite operator on the Hilbert space $L^2(\RR^d,\lam;\RR^d)$ for some absolutely continuous measure $\lambda\in\P_2$.
	\item The inverse operator $A^{-1}$ has a finite trace. 
\end{itemize}
In the first of these two conditions, the particular choice of $\lambda$ is irrelevant (see the discussion in Section \ref{sec:supp}). The second condition implies that 
$A^{-1}$ is the covariance operator of a centered, non-degenerate Gaussian measure $G$ on $T_\lam:=L^2(\RR^d,\lam;\RR^d)$. 
Let $K^A={(K^A_{k,l})}_{k,l\in\{1,\dots,d\}}:\RR^d\times\RR^d\to\RR^{d\times d}$ denote the integral kernel of $A^{-1}$, meaning
\begin{equation*}
	K^A(x,y)=\sum_{n=1}^\infty\alpha_ng_n(x)g_n(y)^\tr
\end{equation*}
for the sequence of eigenvalues ${(\alpha_n)}_{n\in\NN}$ (multiplicities included) with a corresponding orthonormal eigenbasis ${(g_n)}_{n\in\NN}$. Moreover, we define
\begin{align*}
	\Psi_\lam:T_\lam\ni f&\;\mapsto\; \lam\circ f^{-1}\in\P_2\\
\text{and}\qquad\qquad	\vec \Psi_\lam:T_\lam\times\RR^d\ni (f,x)&\;\mapsto\; (\Psi_\lam(f), f(x))\in\P_2\times \RR^d.
\end{align*}

There is family of coefficient functions $\{\rho^{\mu}_{k,l}:\RR^d\to\RR$ $|$ $\mu\in\P_2,\,k,l\in\{1,\dots d\}\}$ depending only on $A$, such that the drift part of 
\begin{equation}\label{eq:introPhi}
	\int_{\RR^d}\varphi\d\bm\mu_t-\int_{\RR^d}\varphi\d\bm\mu_0\qquad\text{for }  t\geq  0,\, \varphi\in C^2(\RR^d),\,\sup_{x\in\RR^d}|\partial_k\partial_l\varphi(x)|<\infty,
	\,k,l\in\{1,\dots,d\},
\end{equation}
is given as
\begin{equation}\label{eq:driftAA}
	\int_0 ^t\int_{\RR^d}	\sum_{k=1}^d\Big[\Big(\sum_{l=1}^d\rho^{\bm\mu_s}_{k,l}(x)\partial_k\partial_l \varphi(x)\Big)-x_k\partial_k\varphi(x)\Big]\bm\mu_s(\d x)\d s.
\end{equation}
The drift part of $\int_{\RR^d}|\cdot|_\euc^2 \d\bm\mu_t-\int_{\RR^d}|\cdot|_\euc^2\d\bm\mu_0$, $t\geq 0$, is given by 
\begin{equation}\label{eq:driftBB}
	2\tra(A^{-1})t-2\int_0^t\int_{\RR^d}|x|_\euc^2\d\bm\mu_s(\d x)\d s.
\end{equation}
\eqref{eq:driftAA}, \eqref{eq:driftBB} are the re-interpretations of \eqref{eq:driftA}, \eqref{eq:driftB} for a Ornstein--Uhlenbeck process on $\P_2$.
In fact, the coefficient $(\mu,x)\mapsto \rho_{k,l}^\mu(x)$ in \eqref{eq:driftAA} coincides with the Radon-Nikodym derivative of the finite, signed image measure $(G\times K^A_{k,l}(x,x)\lam(\d x))\circ\vec \Psi_\lam^{-1}$
w.r.t.~$(G\times \lam)\circ\vec \Psi_\lam^{-1}$.

Concerning the quadratic variation of the martingale part, which for \eqref{eq:intro1} is given as
\begin{equation*}
	2\int_0^t\int_{\RR^d}\big|\sqrt{\Sigma}\na\varphi(x)\big|^2_\euc \delta_{X_s}(\d x)\d s,\quad t\geq 0,
\end{equation*}
a norm-square dependence on the gradient $\na\varphi$ also occurs for the semi-martingale in \eqref{eq:introPhi}, but w.r.t.~a different norm which suits the new setting.
There exists a family of Hilbert spaces $\{H_\mu:\mu\in \P_2\}$ such that $L^2(\RR^d,\mu;\RR^d)\hookrightarrow H_\mu$ for $\mu\in\P_2$ are densely, continuously embedded and the quadratic variation of the martingale part of \eqref{eq:introPhi} reads
\begin{equation}\label{eq:QV}
	2\int_0^t\int_{\RR^d}\big\|\na\varphi\|_{H_{\bm \mu_s}}^2 \d s,\quad t\geq 0.
\end{equation}
The family $\{H_\mu:\mu\in \P_2\}$ depends only on $A$ and is characterized by the identity
\begin{equation}\label{eq:QV2}
	\int_{\P_2}{\big\|\eta(\mu,\cdot)\big\|}_{H_\mu}^2(G\circ\Psi_\lam^{-1})(\d\mu)=\int_{T_\lam} \Big\| A^{-\frac{1}{2}}\eta\big(\Psi_\lam(f),f(\cdot)\big )\Big\|_{T_\lam}^2G(\d f),
	\quad\eta\in C_b(\P_2\times\RR^d,\RR^d).
\end{equation}

The invariant measure of  our process ${(\bm\mu_t)}_{t\geq 0}$ is the image of $G$ under $\Psi_\lam$. This measure has first been introduced in \cite{RW22}
as a \textit{Gaussian measure on }$\P_2$.  In Lemma \ref{lem:pac} below, we provide a simple criterion on how to choose  $G$ in order to obtain a  measure on $\P_2$ which assigns the probability $1$ to the subset of absolutely continuous elements $\P_2^\text{ac}\subset\P_2$.
The Dirichlet form analyzed in \cite{RW22} is  different than the one  considered in this article. In our case, the covariance operator $A^{-1}$ of $G$ appears in the square-field operator (compare Lem.~\ref{lem:closable} below with \cite[Thm.~3.2]{RW22}). This difference manifests itself in the fact that $A^{-1}$ appears in the expression for the  quadratic variation \eqref{eq:QV}, \eqref{eq:QV2}.
Another manifestation  of the impact, which our different choice of  square-field operator has, is the   explicit characterization of the generator
(compare Thm.~\ref{thm:thmL} and Prop.~\ref{prp:diffO}  below with \cite[Def.~4.1 \& Thm.~4.1]{RW22}). This allows us to identify the drift part in the Fukushima decomposition (see Thm.~\ref{thm:Fuku} and Cor.~\ref{cor:secmomProcess} below) as in \eqref{eq:driftAA} respectively \eqref{eq:driftBB}.

The outline of this article is as follows. Section \ref{sec:Dform} contains the discussion of a Dirichlet form $\E^{G,\lam}$ in $L^2(\scr P_2, G\circ\Psi_\lam^{-1})$ and properties of the reference measure
$G\circ\Psi_\lam^{-1}$.
In Section \ref{sec:generator}  an explicit form for the generator $L_{G,\lam}$ of $\E^{G,\lam}$ is derived. The martingale solutions
for this generator are characterized in Section \ref{sec:Fuku}.

The main results are:
\begin{itemize}
	\item The characterization of $L_{G,\lam}$ as a second order differential operator on an explicit, measure determining class of test functions, see Theorem \ref{thm:thmL} and Proposition \ref{prp:diffO}.
	\item The decomposition of \eqref{eq:introPhi} into its drift-  and martingale-part, see Theorem \ref{thm:Fuku} and Corollary \ref{cor:secmomProcess}.
\end{itemize}

In this text, $d\in\NN$ is fixed, $\id_{\RR^d}:\RR^d\to\RR^d$ denotes the identity map, $\la x,y\ra_\euc:=x^\tr y$ the standard inner product and $|x|_\euc:=\sqrt{x^\tr x}$
the $2$-norm for $x,y\in\RR^d$.

\section{The Dirichlet form and its reference measure}\label{sec:Dform}

\subsection{The Dirichlet form $\E^{G,\lam}$}\label{sec:B}
For $\mu\in\P_2$ we set
\begin{equation*}
	T_\mu:=L^2(\RR^d,\mu;\RR^d),
\end{equation*}
the set of $\mu$-equivalence classes of measurable functions $f:\RR^d\to\RR^d$, $\int_{\RR^d}|f|_\euc^2\d\mu<\infty$. $T_\mu$ is a separable Hilbert space with scalar product
\begin{equation*}
	{\la f,g\ra}_{T_\mu}:=\int_{\RR^d} {\la f(x),g(x)\ra}_\euc\,\mu(\d x),\qquad f,g\in T_\mu.
\end{equation*}
$\scr P_2$ is endowed with the usual (complete and separable)  $2$-Wasserstein metric
\begin{equation*}
	\WW_2(\mu,\nu):=  \inf_{\pi\in\textnormal{Coupl}(\mu,\nu)}\bigg(\int_{\RR^d\times\RR^d}|x-y|_\euc^2\pi(\d x,\d y)\bigg)^{\frac 1 2 },
\end{equation*}
where the infimum is taken over all couplings $\pi$ of $\mu$ and $\nu$.

\begin{defn}\label{def:intrinsic}
\begin{itemize}[leftmargin=*]
	\item[] a) An \textit{intrinsically differentiable} function $u:\scr P_2\to\RR$ is a continuous map such that
	$$L^2(\RR^d,\mu;\RR^d)\ni f\mapsto D_f u(\mu):=\lim_{\varepsilon\to 0}\frac{u\big(\mu\circ(\id_{\RR^d}+\varepsilon f)^{-1}\big)-u(\mu)}\varepsilon $$
	is a bounded linear functional for every $\mu\in \scr P_2$. In this case, 
	the intrinsic derivative of $u$ at $\mu$ is defined as the unique element $Du(\mu)\in T_{\mu}$ such that
	$$D_f u(\mu)={\la Du(\mu), f\ra}_{T_{\mu}},\qquad f\in T_{\mu}.$$
	\item[] b) The class $C^1(\scr P_2)$ contains all intrinsically differentiable functions $u:\scr P_2\to\RR$  such that
	$$\lim_{n\to\infty}\frac{\big|u\big(\mu\circ(\id_{\RR^d}+f_n)^{-1}\big)-u(\mu)-D_{f_n} u(\mu)\big|}{\|f_n\|_{T_{\mu}}}=0$$
	for every zero-sequence ${(f_n)}_n\in T_\mu$, $\mu\in\P_2$,
	and moreover $Du(\mu)$ possesses $\mu$-versions $\widetilde {Du}(\mu)$ for $\mu\in\scr P_2$ such that $$\P_2\times\RR^d\ni (\mu,x)\mapsto \widetilde {Du}(\mu)(x)\in\RR^d$$ is  continuous.
	\item[]c) The class $C^1_b(\scr P_2)$ contains all elements $u\in C^1(\scr P_2)$ such that   $$\sup_{(\mu,x)\in\P_2\times\RR^d}|u(\mu)|+|\widetilde{Du}(\mu)(x)|_{\euc}<\infty.$$ 
	\end{itemize}
\end{defn}

The elements of $\scr P_2$ which are absolutely continuous w.r.t.~the $d$-dimensional Lebesgue measure are denoted by $\P_2^\ac$.
Throughout this text, we fix $\lam\in\P_2^\ac$ and consider the surjective contraction:
\begin{equation*}
	\Psi_\lam:T_\lam\to\scr P_2,\qquad f\mapsto \lam\circ f^{-1}.
\end{equation*}

Let $(A,\D(A))$ be a self-adjoint, strictly positive operator on $T_\lam$ whose inverse $A^{-1}:T_\lam\to T_\lam$ is trace class. 
There is a unique centered Gaussian measure $G$ on $T_\lam$ with covariance 
\begin{equation}\label{eq:covG}
	\int_{T_\lam}{\la f_1, g\ra}_{T_\lam}{\la f_2, g\ra}_{T_\lam}G(\d g)={\big\la f_1,A^{-1}f_2\big\ra}_{T_\lam}.
\end{equation}
We call $A^{-1}$ the covariance operator of $G$.
By virtue of Lemma \ref{lem:closable} below, we can define a symmetric Dirichlet form (see \cite[Chap.~I]{BH91}, \cite[Chap.~I]{MR92}) with state space $\P_2$, endowed with the Borel $\sigma$-algebra  and the probability measure  $G\circ\Psi_\lam^{-1}$.
The choice of $\lambda\in \P_2^\ac$ does not restrict the class of all Gaussian-based measures $G\circ\Psi_\lam^{-1}$ obtained in the above way, as explained in Section \ref{sec:supp}.
\begin{lem}\label{lem:closable}
The symmetric, non-negative bilinear form
\begin{equation}\label{eq:preDirichlet}
	\E^{G,\lam}(u,v):=\int_{T_\lam}{\big\la  Du(\lam\circ f^{-1})\circ f,A^{-1}\big(Du(\lam\circ f^{-1})\circ f\big)\big\ra}_{T_\lam}G(\d f),\qquad u,v\in C_b^1(\scr P_2),
\end{equation}
is a pre-Dirichlet form in $L^2(\scr P_2,G\circ\Psi_\lam^{-1})$, i.e.~it has the Markov property and is closable.
\end{lem}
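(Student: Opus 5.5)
The plan is to transfer the problem to the Gaussian space $(T_\lam,G)$, where closability of gradient forms with a trace class covariance is classical. Note first that the formula presumably means $\la Du(\cdot)\circ f, A^{-1}(Dv(\cdot)\circ f)\ra$; the form is symmetric and non-negative because $A^{-1}$ is. For $u\in C_b^1(\P_2)$ put $U:=u\circ\Psi_\lam:T_\lam\to\RR$.

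\textbf{Step 1: a chain rule.} Show that $U$ is Fréchet differentiable with gradient
\begin{equation*}
	\na U(f)=\widetilde{Du}(\lam\circ f^{-1})\circ f\in T_\lam,
\end{equation*}
which is bounded and continuous in $f$. For $h\in T_\lam$ one has $\Psi_\lam(f+h)=\lam\circ(f+h)^{-1}$. When $f$ is injective this equals $\mu\circ(\id+h\circ f^{-1})^{-1}$ with $\mu:=\Psi_\lam(f)$, and the $C^1$ condition in Definition \ref{def:intrinsic} b) gives the expansion directly. For general $f$, I would argue along the path $t\mapsto f+th$: the coupling $\lam\circ(f+th,f+(t+\varepsilon)h)^{-1}$ together with the joint continuity of $\widetilde{Du}$ gives the derivative
\begin{equation*}
	\frac{\d}{\d t}U(f+th)=\int_{\RR^d}\big\la \widetilde{Du}(\Psi_\lam(f+th))(f+th),h\big\ra_\euc\,\d\lam.
\end{equation*}
Here a Taylor-type estimate for $u$ along couplings is derived from the $C^1$ definition, possibly via a mollification or density argument. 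Consequently $\E^{G,\lam}(u,v)=\int\la\na U,A^{-1}\na V\ra_{T_\lam}\d G$.

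\textbf{Step 2: Markov property.} Take $\phi\in C^\infty(\RR)$ with $\phi(0)=0$, $0\le\phi'\le1$ and $|\phi(t)-\phi(s)|\le|t-s|$. Then $\phi\circ u\in C_b^1(\P_2)$ with $D(\phi\circ u)=\phi'(u)Du$. Hence $\E(\phi\circ u,\phi\circ u)\le\E(u,u)$, which is the standard sufficient condition for the Markov property of a closable form (cf.\ \cite[Chap.~I]{MR92}).

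\textbf{Step 3: closability.} This is the main obstacle. Let $u_n\to0$ in $L^2(G\circ\Psi_\lam^{-1})$ be $\E$-Cauchy. Then $U_n\to0$ in $L^2(G)$, and $A^{-1/2}\na U_n\to F$ in $L^2(G;T_\lam)$; we must show $F=0$. Take the eigenbasis $(g_k)$ of $A^{-1}$ with eigenvalues $\alpha_k$ and use the Gaussian integration by parts on the Cameron--Martin space $A^{-1/2}T_\lam$. For $\Phi\in\F C_b^\infty$ (cylinder functions in $\la\cdot,g_j\ra$),
\begin{equation*}
	\int \partial_{\sqrt{\alpha_k}g_k}U\,\Phi\,\d G=-\int U\big(\partial_{\sqrt{\alpha_k}g_k}\Phi-\alpha_k^{-1/2}\la f,g_k\ra\Phi\big)\d G .
\end{equation*}
Note that $\partial_{\sqrt{\alpha_k}g_k}U=\la A^{-1/2}\na U,g_k\ra$. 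The formula follows from the one-dimensional Gaussian identity after conditioning on the other coordinates, using Step 1 and the boundedness of $\na U$. Letting $n\to\infty$ with $U=U_n$, the right-hand side tends to $0$, since $\la f,g_k\ra\Phi\in L^2(G)$. So $\int\la F,g_k\ra\Phi\,\d G=0$ for all such $\Phi$ and all $k$. Density of $\F C_b^\infty$ in $L^2(G)$ then gives $F=0$.

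Finally, the limit seen as a function on $\P_2$ is well defined, since the statement concerns the form on $L^2(\P_2,G\circ\Psi_\lam^{-1})$. The isometry $u\mapsto u\circ\Psi_\lam$ makes closability in $L^2(G\circ\Psi_\lam^{-1})$ equivalent to closability of the image form, so this reduction suffices.
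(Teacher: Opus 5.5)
Your proof is correct, but for closability it argues more directly than the paper does.

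\textbf{What the paper does.} It does not work with Cauchy sequences. It passes to the Gaussian Sobolev space $W^{1,2}(T_\lam,G)$ and writes the weighted form $\int\la\na w,A^{-1}\na w\ra_{T_\lam}\,\d G$ as a countable sum, over an eigenbasis, of squared directional derivatives $\la\na w,g_n\ra_{T_\lam}$. Each of these is closable, a consequence of \cite[Lem.~9.2.5]{DZ02}, so the sum has a closure $W^{1,2}_{A^{-1}}(T_\lam,G)$. The paper then uses the chain rule $\na(u\circ\Psi_\lam)(f)=Du(\Psi_\lam(f))\circ f$ and the stability of $C_b^1(\P_2)$ under $u\mapsto\chi\circ u$. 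Closability and the Markov property then both follow from the Bouleau--Hirsch criterion for image structures under $\Psi_\lam$ \cite[Sect.~V.1.3]{BH91}.

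\textbf{What you do instead.} Your Step 3 in effect reproves the cited closability lemma exactly where it is needed. You integrate by parts along the Cameron--Martin directions $A^{-1/2}g_k$, whose directional derivatives are precisely the coordinates of $A^{-1/2}\na U$, and this kills the limit $F$ directly. Your Step 2 checks the Markov property by hand. Your inequality holds for every $C^1$ function $\phi$ with $|\phi'|\le 1$. In particular it holds for the normalized contractions $\phi_\varepsilon$ (identity on $[0,1]$, values in $[-\varepsilon,1+\varepsilon]$) that the criterion in \cite[Chap.~I]{MR92} actually requires.

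\textbf{What each route buys.} Yours is more self-contained and avoids the image-structure machinery. The paper's route additionally gives Remark \ref{rem:imageForm}: every $u\in\D(\E^{G,\lam})$ lifts into $W^{1,2}_{A^{-1}}(T_\lam,G)$ with $\E^{G,\lam}(u,v)=\la u\circ\Psi_\lam,v\circ\Psi_\lam\ra_{W^{1,2}_{A^{-1}}(T_\lam,G)}$. Theorem \ref{thm:thmL} later needs this for cylinder functions outside $C_b^1(\P_2)$. Your integration by parts would also give it if you ran it on all of $W^{1,2}(T_\lam,G)$ rather than only on lifts of $C_b^1$ functions.

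\textbf{A caveat on Step 1.} Your sketch really only covers injective $f$, and $G$-typical $f$ need not be injective. When $f$ is not injective, $\Psi_\lam(f+h)$ is not the push-forward of $\Psi_\lam(f)$ by a map, so Definition \ref{def:intrinsic} b) does not apply directly. The ``Taylor estimate along couplings'' you allude to is exactly the nontrivial content of the chain rule. Rather than a mollification heuristic, cite it as the paper does, from \cite[Lem.~3.2]{RWW24} (see also \cite[Thm.~2.1]{BRW21}); the proof there uses that $\lam$ is atomless.
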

\begin{proof}
	We denote the $(1,2)$-Sobolev space (see \cite[Chap.~II.9]{DZ02}) of the Gaussian measure $G$ by $W^{1,2}(T_\lam,G)$. Let ${(g_n)}_n$ be an eigenbasis $Ag_n=\alpha_ng_n$ with eigenvalues ${(\alpha_n)}_n\subset(0,\infty)$. Then,
\begin{equation*}
	\int_{T_\lam}{\big\la \na w(f),A^{-1}\na w(f)\big\ra}_{T_\lam}G(\d f)=\sum_{n\in\NN }\alpha_n^{-1}\int_{T_\lam}{\la \na w(f),g_n\ra}_{T_\lam}^2G(\d f)
\end{equation*}
for $w\in W^{1,2}(T_\lam,G)$ and the operators $w\mapsto {\la \na w,g_n\ra}_{T_\lam}$, $n\in\NN$, are  closable in $L^2(T_\lam,G)$ (a consequence of \cite[Lem.~9.2.5]{DZ02}).
So, the minimal closed extension of the symmetric, non-negative bilinear form 
\begin{equation}\label{eq:WAform}
	(w_1,w_2)\;\mapsto\; \int_{T_\lam}{\big\la \na w_1(f),A^{-1}\na w_2(f)\big\ra}_{T_\lam}G(\d f),\qquad w_1,w_2\in W^{1,2}(T_\lam, G),
\end{equation}
in $L^2(T_\lam,G)$ exists. We write ${\la\cdot,\cdot\ra}_{W^{1,2}_{A^{-1}}(T_\lam, G)}$ 
for the extension of \eqref{eq:WAform} to the domain of its closure, which is denoted by $W^{1,2}_{A^{-1}}(T_\lam, G)$.

As shown in \cite[Lem.~3.2]{RWW24} (also see \cite[Thm.~2.1]{BRW21}, \cite[Prop.~2.2]{RW22}) the composition $u\circ\Psi_\lam:T_\lam\to\RR$ for $u\in C_b^1(\scr P_2)$
is a Fréchet differentiable function, and moreover 
the (Fréchet-) gradient $\na(u\circ\Psi_\lam):T_\lam\to T_\lam$  satisfies
\begin{equation}\label{eq:chainR}
	\na(u\circ\Psi_\lam)(f)=Du(\Psi_\lam(f))\circ f,\qquad f\in T_\lam.
\end{equation}
In particular, $u\circ\Psi_\lam, v\circ\Psi_\lam\in W^{1,2}(T_\lam,G)\subset W_{A^{-1}}^{1,2}(T_\lam,G)$ and
$\E^{G,\lam}(u,v)={\la u\circ\Psi_\lam,v\circ\Psi_\lam\ra}_{W^{1,2}_{A^{-1}}(T_\lam, G)}$ for $u,v\in C_b^1(\P_2)$.
Since $\chi\circ u\in C_b^1(\P_2)$ 
 is immediate by Definition \ref{def:intrinsic} for  $\chi\in C^1(\RR)$, $\chi'\in C_b(\RR^d)$,
the statement of Lemma \ref{lem:closable} now follows by the criterion in \cite[Section V.1.3]{BH91} for general image structures.
\end{proof}

The minimal closed extension of the form in Lemma \ref{lem:closable} is denoted by $(\E^{G,\lam},\D(\E^{G,\lam}))$ in the following.

\begin{rem}\label{rem:imageForm}
	The proof of Lemma \ref{lem:closable} yields in fact $$\D(\E^{G,\lam})\subseteq \big\{u\in L^2(\P_2, G\circ \Psi_\lam^{-1}): u\circ \Psi_\lam\in W_{A^{-1}}^{1,2}(T_\lam,G)\big\}$$
	and the identity
	\begin{align*}
		\E^{G,\lam}(u,v)={\la u\circ\Psi_\lam,v\circ\Psi_\lam\ra}_{W^{1,2}_{A^{-1}}(T_\lam, G)}\qquad \text{for }u,v\in \D(\E^{G,\lam}),
	\end{align*}
	where $\big({\la \cdot ,\cdot \ra}_{W^{1,2}_{A^{-1}}(T_\lam, G)},W^{1,2}_{A^{-1}}(T_\lam, G)\big)$ denotes the minimal closed extension of \eqref{eq:WAform} in $L^2(T_\lam,G)$.
\end{rem}

\subsection{Support properties of  $G\circ\Psi_\lam^{-1}$}\label{sec:supp}

Let $\Lambda:= G\circ\Psi_\lam^{-1}$. The right-hand side of this definition (depending on $\lambda$) is  one among a variety of  realizations of $\Lambda$ ($\Lambda$ is not depending on the particular choice of $ \lambda$).
In fact, for every $\mu\in\P_2^\ac$  there exists a Gaussian measure $\tilde G$ on $T_\mu$ such that $\Lambda=\tilde G\circ \Psi_\mu^{-1}$, where
\begin{equation*}
	\Psi_\mu:T_\mu\to\scr P_2,\qquad f\mapsto \mu\circ f^{-1},
\end{equation*}
as the following observation shows.
\begin{rem}
\begin{itemize}[leftmargin=*]
	\item[]a) Let $\mu\in\P_2^\ac$. There is a transport map $h\in T_\mu$ such that $\lam=\mu\circ h^{-1}$. The map
	\begin{equation*}
		I_h:T_\lam\to T_\mu,\quad I_hf:=f\circ h,
	\end{equation*}
	is a linear isometry and yields $\Psi_\lam=\Psi_\mu\circ I_h$. So, defining a Gaussian measure $\tilde G:=G\circ I_h^{-1}$ on $T_\mu$ results in $\Lam=\tilde G\circ\Psi_\mu^{-1}$.
	\item[]b) 
	Let $I_h^*:T_\mu\to T_\lam$ denote the adjoint operator of $I_h$ (identifying a Hilbert space with its dual space).
	The representation of the Dirichlet form $\E^{G,\lam}(u,v)$, for $u,v\in C_b^1(\P_2)$, in terms of $\tilde G$ and the covariance operator $$\tilde A:=I_hA^{-1}I_h^*$$ of $\tilde G$ is obtained from the chain rule. As in the proof of Lemma \ref{lem:closable}, let $\na$ denote the gradient on $T_\lam$. To distinguish in notation between gradients w.r.t.~different Hilbert spaces, we write
	$\na_{T_\mu}$ for the gradient on $T_\mu$  in the subsequent series of identities. 
	
	If $w:T_\mu\to\RR$ is Fréchet differentiable at $I_hf$ for some $f\in T_\lam$, then $w\circ I_h:T_\lam\to\RR$ is Fréchet differentiable at $f$ and 
	\begin{equation*}
		{\big\la\na(w\circ I_h)(f),g\big\ra}_{T_\lam}={\big\la \na_{T_\mu}w(I_hf),I_hg\big\ra}_{T_\mu}\qquad\text{for }g\in T_\lam.
	\end{equation*}
	The analogue of \eqref{eq:chainR}, i.e.~the chain rule for the composition of a $C_b^1(\P_2)$-function with the map $T_\nu\ni f\mapsto\nu\circ f^{-1}$ can be stated w.r.t.~any reference point  $\nu\in \P_2$.
	In particular,
	\begin{equation*}
		\na(u\circ\Psi_\mu)(f)=Du(\Psi_\mu(f))\circ f,\qquad f\in T_\mu,\,u\in C_b^1(\P_2).
	\end{equation*}
	By the above considerations, for $u,v\in C_b^1(\P_2)$ we have
	\begin{align*}
		&{\big\la \na (u\circ\Psi_\lam),A^{-1}\na (v\circ\Psi_\lam)\big\ra}_{T_\lam}
	={\big\la \na (u\circ\Psi_\mu\circ I_h),A^{-1}\na (v\circ\Psi_\lam)\big\ra}_{T_\lam}\\
		&={\big\la \na_{T_\mu} (u\circ\Psi_\mu)\circ I_h,I_hA^{-1}\na (v\circ \Psi_\lam)\big\ra}_{T_\mu}\\
		&={\big\la A^{-1}I_h^* \big(\na_{T_\mu}(u\circ\Psi_\mu)\circ I_h\big),\na (v\circ \Psi_\lam)\big\ra}_{T_\lam}\\
		&={\big\la A^{-1}I_h^* \big(\na_{T_\mu}(u\circ\Psi_\mu)\circ I_h\big),\na (v\circ \Psi_\mu\circ I_h)\big\ra}_{T_\lam}\\
		&={\big\la \tilde A \big(\na_{T_\mu}(u\circ\Psi_\mu)\circ I_h\big),\na_{T_\mu} (v\circ\Psi_\mu)\circ I_h\big\ra}_{T_\mu}.
	\end{align*}
	Consequently,
		\begin{align*}
		\E^{G,\lam}(u,v)&=\int_{T_\lam}{\big\la \na (u\circ\Psi_\lam)(f),A^{-1}\na (v\circ\Psi_\lam)(f)\big\ra}_{T_\lam}G(\d f)\\
		&=\int_{T_\lam}{\big\la \tilde A\na_{T_\mu} (u\circ\Psi_\mu) (I_hf),\na_{T_\mu} (v\circ\Psi_\mu)(I_hf)\big\ra}_{T_\mu}G(\d f)\\
		&=\underbrace{\int_{T_\mu}{\big\la \na_{T_\mu} (u\circ\Psi_\mu) (f),\tilde A\na_{T_\mu} (v\circ\Psi_\mu)(f)\big\ra}_{T_\mu}\tilde G(\d f)}_{=:\E^{\tilde G,\mu}(u,v)}.
	\end{align*}
\end{itemize}
\end{rem}

\begin{rem}
$G$ is chosen as a non-degenerate Gaussian measure on $T_\lam$, i.e.~having full topological support.
Since the pre-image set under $\Psi_\lam$  of any non-empty, open subset $U$ of $\P_2$ is again non-empty and open, $\Lam(U)=G(\Psi_\lam^{-1}(U))>0$.
So, $\Lam$ has full topological support on $\P_2$.
\end{rem}

Apart from that, a reasonable class of Gaussian measures $G$ on $T_\lam$ result in $\Lam(\P_2^\ac)=G (\Psi_\lam^{-1}(\P_2^\ac))=1$.
We assume $G$ is chosen in such a way that $G$-a.e.~$f$ satisfies $f\in C^1(\RR^d,\RR^d)$ and its Jacobi matrix $Jf(x)$ is invertible at $\lam$-a.e.~point $x\in\RR^d$, i.e.
\begin{equation}\label{eq:regG}
	G\Big(\{f\in C^1(\RR^d,\RR^d):\lambda(\{\det( Jf)=0\})=0\big\}\Big)=1.
\end{equation}

\begin{rem}
	\begin{itemize}[leftmargin=*]
		\item[]a) Let $f\in C^1(\RR^d,\RR^d)$ and $\rho_\lam$ denote the probability density of $\lam$ w.r.t.~the Lebesgue measure. 
		For an open domain $O\subset\RR^d$ on which  $f$ restricts to a diffeomorphism  $f|_O:O\to f(O)$, we have
		\begin{equation}\label{eq:trafo}
			\int_O \varphi(f(x))\lam(\d x)=\int_{f(O)}\frac{\varphi(x)\rho_\lam(f^{-1}(x))}{\big|\det (Jf(f^{-1}(x)))\big|}\d x
		\end{equation} 
		for measurable $\varphi:\RR^d\to[0,\infty)$.
		\item[]b) Let $f\in C^1(\RR^d,\RR^d)$ with $\lambda(\{\det (Jf)=0\})=0$. 
		The open set $\{\det( Jf)\neq 0\}$ can be written as the countable union of open domains ${\{O_n\}}_{n\in\NN}$ on each of which $f$ restricts to a diffeomorphism
		(Inverse Function Theorem). If $N\subset\RR^d$ is measurable and has Lebesgue measure zero, then
		\begin{equation*}
			\lam(f^{-1}(N))=\lam\big(f^{-1}(N)\cap \{\det( Jf)\neq 0\}\big)\leq \sum_{n\in\NN}\int_{O_n}\eins_{N}(f(x)) \lam(\d x)=0
		\end{equation*}
		by \eqref{eq:trafo}. Hence $\Psi_\lam(f)=\lam\circ f^{-1}\in\P_2^\ac$.
	\end{itemize}
	\item[]c) Under assumption \eqref{eq:regG}, $\Lam(\P_2^\ac)=G (\Psi_\lam^{-1}(\P_2^\ac))=1$ follows by b).
\end{rem}

We formulate a condition for a Gaussian measure on $T_\lam$ to satisfy \eqref{eq:regG}.

\begin{lem}\label{lem:pac}
	Let $G$ be a Gaussian on $T_\lam$ with covariance operator $A^{-1}$ as in \eqref{eq:covG}. If $$G\big(C^1(\RR^d,\RR^d)\big)=1$$ and there exists 
	an eigenvector $g\in C^1(\RR^d,\RR^d)$ of $A$ with $\lam (\{\det (Jg)= 0\})=0$, then  \eqref{eq:regG} is satisfied. In particular,
	\begin{equation*}
		G (\Psi_\lam^{-1}(\P_2^\ac))=1.
	\end{equation*}
\end{lem}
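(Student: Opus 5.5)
Decompose $G$ along the eigenvector $g$ and use that a polynomial in one real variable with nonzero leading term has only finitely many roots. Normalize $\|g\|_{T_\lam}=1$ with $Ag=\alpha g$. The orthogonal decomposition $T_\lam=\RR g\oplus g^\perp$ splits $G$ as a product: under $G$ the random element can be written $f=\xi g+h$, where $\xi=\la f,g\ra_{T_\lam}$ is a centered real Gaussian with variance $\alpha^{-1}>0$, and $h=f-\xi g$ is independent of $\xi$ with law $G_\perp$ on $g^\perp$. Independence holds because $g$ is an eigenvector of the covariance $A^{-1}$, so $\la f,g\ra$ and $\la f,k\ra$ are uncorrelated for $k\perp g$. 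Thus $G=(N(0,\alpha^{-1})\otimes G_\perp)\circ\iota^{-1}$ with $\iota(t,h)=tg+h$.

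By assumption $G(C^1)=1$ and $g\in C^1$. Hence for $N(0,\alpha^{-1})$-a.e.\ $t$ the condition $tg+h\in C^1$ holds for $G_\perp$-a.e.\ $h$. Since $g\in C^1$, this condition does not depend on $t$, so $G_\perp$-a.e.\ $h$ is $C^1$ (choosing a $C^1$ representative). Fix such an $h$ and set
\begin{equation*}
	p_x(t):=\det\big(t\,Jg(x)+Jh(x)\big),
\end{equation*}
a polynomial in $t$ of degree at most $d$. Its $t^d$-coefficient is $\det Jg(x)$, which is nonzero for $x\notin N_g:=\{\det(Jg)=0\}$, and $\lam(N_g)=0$. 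For each $x\notin N_g$, $p_x$ is therefore a nonzero polynomial with at most $d$ real roots, so $\{t:p_x(t)=0\}$ is Lebesgue-null and hence $N(0,\alpha^{-1})$-null.

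Next apply Fubini on $\RR\times\RR^d$ with the measure $N(0,\alpha^{-1})\otimes\lam$ to the jointly measurable set $\{(t,x):p_x(t)=0\}$; measurability holds because $(t,x)\mapsto p_x(t)$ is continuous. The previous step gives
\begin{equation*}
	\int_\RR\lam\big(\{x:\det J(tg+h)(x)=0\}\big)\,N(0,\alpha^{-1})(\d t)=\int_{\RR^d\setminus N_g}N(0,\alpha^{-1})(\{t:p_x(t)=0\})\,\lam(\d x)=0.
\end{equation*}
So for a.e.\ $t$ we have $\lam(\{\det J(tg+h)=0\})=0$. Integrating over $h$ with respect to $G_\perp$ and using the product representation yields \eqref{eq:regG}. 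The final claim $G(\Psi_\lam^{-1}(\P_2^\ac))=1$ then follows from part c) of the preceding remark.

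The main technical care goes into the decomposition step: one needs $C^1$-representatives chosen consistently, and measurability of the set $\{f\in C^1:\lam(\{\det Jf=0\})=0\}$ inside $T_\lam$. I would handle this by working on the product space directly. Concretely, I would define the event through the jointly measurable function $(t,h,x)\mapsto\det J(tg+h)(x)$ on a measurable $G_\perp$-full set of $C^1$ representatives, and conclude by Fubini–Tonelli there rather than arguing measurability in $T_\lam$ itself.
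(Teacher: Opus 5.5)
Your argument is correct. It shares the paper's core idea: perturb along the eigenvector $g$, and use that $t\mapsto\det(t\,Jg(x)+Jh(x))$ is a polynomial with leading coefficient $\det Jg(x)\neq 0$, so it has only finitely many roots. Where you differ is in how you exploit the invariance of $G$ along $g$.

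The paper never splits $G$. It averages over the shifts $f\mapsto f-sg$, $s\in[0,1]$, using the Cameron--Martin formula with density $\exp(-\tfrac12\alpha s^2+s\alpha\la g,f\ra_{T_\lam})$. It then applies Fubini so that $s$ is integrated first against Lebesgue measure on $[0,1]$.

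You instead use that $\la f,g\ra_{T_\lam}$ is independent of the projection of $f$ onto $g^\perp$, because $g$ is an eigenvector of the covariance. Hence $G$ is the image of $N(0,\alpha^{-1})\otimes G_\perp$, and the one-dimensional variable is integrated against the nondegenerate Gaussian $N(0,\alpha^{-1})$.

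What each route buys:
\begin{itemize}
\item Yours needs only the elementary fact that uncorrelated jointly Gaussian variables are independent. The cost is the extra step showing that $G_\perp$-a.e.\ $h$ has a $C^1$ version.
\item The paper's route works with $f$ directly and is shorter. It also makes clear that only quasi-invariance of $G$ under shifts along $g$ is used, which holds for any $g$ in the Cameron--Martin space.
\item Your orthogonal splitting genuinely uses the eigenvector property. Without it, you would need a covariance-adapted, non-orthogonal complement.
\end{itemize}

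Both proofs rest on the same tacit joint measurability of $(f,x)\mapsto\det Jf(x)$; you at least flag this explicitly.
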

\begin{proof}
	Let $g$ be as above with $Ag=\alpha g$ for some $\alpha\in(0,\infty)$ and w.l.o.g.~$\|g\|_{T_\lam}=1$. Then,
	\begin{align*}
		&\int_{T_\lam} \int_{\RR^d}\eins_{\{0\}}\big(\det (Jf(x))\big)\lam(\d x)G(\d f)
		=\int_0^1 \int_{T_\lam}\int_{\RR^d}\eins_{\{0\}}\big(\det (Jf(x))\big)\lam(\d x)G(\d f)\d s
		\\&=\int_0^1\int_{T_\lam}\int_{\RR^d}\eins_{\{0\}}\Big(\det \big(Jf(x)-s Jg(x)\big)\Big)\lam(\d x)\exp\big(-\tfrac{1}{2}\alpha s^2+s\alpha\la g,f\ra_{T_\lam}\big)G(\d f)\d s
		\\&=\int_{T_\lam}\int_{\RR^d}\int_0^1\eins_{\{0\}}\Big(\det \big(Jf(x)-s Jg(x)\big)\Big)\exp\big(-\tfrac{1}{2}\alpha s^2+s\alpha\la g,f\ra_{T_\lam}\big)\d s \lam(\d x)G(\d f)
	\end{align*}
	using the Cameron-Martin formula and Fubini.
	For $x\in\RR^d$ such that $\det( Jg(x))\neq 0$ and $I_{d\times d}\in\RR^{d\times d}$ being the identity matrix,
	\begin{equation*}
		\det \big((Jf(x))+s (Jg(x))\big)=0\qquad\iff \qquad \det \big(Jg(x)^{-1}Jf(x)+s I_{d\times d}\big)=0
	\end{equation*}
	is true for only finitely many $s\in[0,1]$. Hence, the integral above vanishes.
\end{proof}

\section{The generator}\label{sec:generator}

\subsection{Partial integration on the tangent space $T_\lam$}\label{sec:AB}
We want to calculate the generator of $(\E^{G,\lam},\D(\E^{G,\lam}))$. Proposition \ref{prp:divAB} below is a preparation applying partial integration w.r.t.~the Gaussian measure $G$
on a special class of functions.
The main results are formulated in Sections \ref{sec:gen} \& \ref{sec:diffOp}.

For $w\in W^{1,2}(T_\lam,G)$, the $(1,2)$-Sobolev space (see \cite[Chap.~II.9.2]{DZ02}) for the Gaussian measure $G$, and $g\in T_\lam\setminus\{0\}$ we set
\begin{equation*}
	\partial_g w:=\la g,\na w\ra_{T_\lam}\in L^2(T_\lam,G).
\end{equation*}
Let additionally $ \partial_gw\in W^{1,2}(T_\lam,G)$ for all $g\in T_\lam\setminus\{0\}$ and $r\in(2,\infty)$ such that
\begin{equation*}
	\sup_{\|g\|_{T_\lam}=1}\big|\partial_g\partial_gw\big|\in L^2(T_\lam,G),\qquad \|\na w\|_{T_\lam}\in L^r(T_\lam,G).
\end{equation*}
For any positive, symmetric trace class operator $B$ on $T_\lam$ with trace $\tra(B)\in(0,\infty)$,
\begin{equation}\label{eq:divergence}
\div (B\na w):=	\sum_{n\in\NN} \partial_{g_n} \la g_n,B\na w\ra_{T_\lam}\in   L^2(T_\lam,G)
\end{equation}
for an orthonormal basis ${(g_n)}_{n\in\NN}$ of $T_\lam$, by the estimate
\begin{equation*}
	\Big|\sum_{n\in\NN} \partial_{g_n} \la g_n,B\na w\ra_{T_\lam}(f)\Big|\leq \tra(B)\sup_{\|g\|_{T_\lam}=1}\big|\partial_g\partial_gw(f)\big|,\qquad f\in T_\lam.
\end{equation*} 
The function in \eqref{eq:divergence} doesn't depend on the choice of an orthonormal basis and 
is the divergence of the vector field $B\na w$, by definition. 

Let $B$ be chosen in such a way that the composition $AB$ is a bounded operator on $T_\lam$, where
$A^{-1}$ is the covariance operator of $G$, as in Section \ref{sec:Dform}.
 Integration by parts for the Gaussian measure $G$ (see the subsequent remark) yields
\begin{equation}\label{eq:AB}
	\int_{T_\lam}{\big\la\na v(f),B\na w(f)\big\ra}_{T_\lam} G(\d f)
	=-\int_{T_\lam}v(f)\Big[\div (B\na w)(f)-{\big\la AB\na w(f),f\big\ra}_{T_\lam}\Big]G(\d f)
\end{equation}
for $w$ as above and $v\in W^{1,2}(T_\lam,G)$.
\begin{rem}
	In fact, \eqref{eq:AB} is obtained from \cite[Cor.~9.2.9]{DZ02} by choosing an eigenbasis ${(e_n)}_{n\in\NN}$ of $A$, since
	\begin{align}\label{eq:GparI}
		&\int_{T_\lam}{\big\la\na v(f),B\na w(f)\big\ra}_{T_\lam} G(\d f)\\\nonumber
		&=\sum_{n\in\NN}\int_{T_\lam}\partial_{e_n} v(f){\big\la e_n,B\na w(f)\big\ra}_{T_\lam} G(\d f)\\
		&=-\sum_{n\in\NN}\int_{T_\lam}v(f)\Big[\partial_{e_n}{\big\la e_n,B\na w\big\ra}_{T_\lam}(f)-\nonumber
	{\big\la Ae_n,f\big\ra}_{T_\lam}	{\big\la e_n, B\na w(f)\big\ra}_{T_\lam}\Big]G(\d f).
	\end{align}
	We have ${\la AB\na w(\cdot ),\,\cdot\, \ra}_{T_\lam}\in L^2(T_\lam,G)$
	because  a Gaussian measure possesses all moments
	\begin{equation}\label{eq:Gmom}
		\int_{T_\lam}\|f\|_{T_\lam}^qG(\d f)<\infty,\qquad q\in[1,\infty),
	\end{equation}
	and $\|AB\na w\|_{T_\lam}\leq C\|\na w\|_{T_\lam}\in L^r(T_\lam,G)$ for some $r\in(2,\infty)$, $C\in(0,\infty)$.
	So, \eqref{eq:GparI} equals
	\begin{equation*}
	-\int_{T_\lam}v(f)\Big[\div (B\na w)(f)-{\big\la AB\na w(f),f\big\ra}_{T_\lam}\Big]G(\d f)
	\end{equation*}
	as claimed by \eqref{eq:AB}.
\end{rem}

For a probability measure $\mu$ on $\RR^d$ and a $\mu$-integrable function $\varphi:\RR^d\to\RR$ we write $\mu(\varphi):=\int_{\RR^d}\varphi\d\mu$.
Proposition \ref{prp:divAB} states the explicit calculation of $\div (B\na w)$ in case $w=u\circ\Psi_\lam$ for $u\in C^1(\P_2)$ with cylindrical shape. More precisely, we assume
\begin{equation}\label{eq:u}
	u:\scr P_2\ni \mu\mapsto F\big(\mu(\varphi_1),\dots,\mu(\varphi_m)\big)\in\RR,
\end{equation}
with coefficients $F\in C^2(\RR^m)$, $\varphi_1,\dots,\varphi_m\in C^2(\RR^d)$, $m\in\NN$ such that 
\begin{align}\label{eq:ubounds}
\sup_{y\in\RR^m}	\big(|\partial_i F(y)|+|\partial_i\partial_j F(y)|\big)+\sup_{x\in\RR^d}\big(|\partial_k\partial_l \varphi_i(x)|\big)<\infty,\qquad 1\leq i,j\leq m,\quad 1\leq k,l\leq d.
\end{align}
The gradient of $\varphi_i$ is denoted by $\na^\euc\varphi_i:\RR^d\to\RR^d$ in order to distinguish in notation from the gradient for functions in $W^{1,2}(T_\lam,G)$. 
Due to \eqref{eq:ubounds}, 
\begin{equation}\label{eq:phibounds}
	\sup_{x\in\RR^d}\frac{|\na^\euc \varphi_i(x)|}{1+|x|}<\infty\qquad \text{and}\qquad\sup_{x\in\RR^d}\frac{|\varphi_i(x)|}{1+|x|^2}<\infty
\end{equation}
for $1\leq i\leq m$. In particular, \eqref{eq:u} is well-defined and continuous.
The Hessian matrix of $\varphi_i$ is denoted by 
\begin{equation*}
	\hes\varphi_i:={\big[\partial_k\partial_l \varphi_i\big]}_{k,l=1}^d.
\end{equation*}

\begin{prp}\label{prp:divAB}
	If $u$ is given as in \eqref{eq:u}, \eqref{eq:ubounds}, then $u\circ\Psi_\lam, \partial_g (u\circ\Psi_\lam) \in W^{1,2}(T_\lam,G)$ for $g\in T_\lam\setminus\{0\}$ and
	\begin{multline*}
		\partial_g\partial_g(u\circ\Psi_\lam)(f)=\Big(\sum_{i,j=1}^m\partial_i\partial_jF\big(\lam(\varphi_1\circ f),\dots,\lam(\varphi_m\circ f)\big)
	{\big\la \na^\euc\varphi_i \circ f,g\big\ra}_{T_\lam}{\big\la \na^\euc\varphi_j \circ f,g\big\ra}_{T_\lam}\Big)
	\\+\Big(\sum_{i=1}^m\partial_iF\big(\lam(\varphi_1\circ f),\dots,\lam(\varphi_m\circ f)\big){\big\la g,(\hes\varphi_i \circ f)g\big\ra}_{T_\lam}\Big).
	\end{multline*}
	for $f\in T_\lam$. In particular,
	\begin{multline*}
		\div \big(B\na(u\circ\Psi_\lam)\big)(f)=
		\Big(\sum_{i,j=1}^m\partial_i\partial_jF\big(\lam(\varphi_1\circ f),\dots,\lam(\varphi_m\circ f)\big)
		{\big\la B(\na^\euc\varphi_i \circ f),\na^\euc\varphi_j \circ f\big\ra}_{T_\lam}\Big)\\
		+\Big(\sum_{n=1}^\infty\sum_{i=1}^m\partial_iF\big(\lam(\varphi_1\circ f),\dots,\lam(\varphi_m\circ f)\big){\big\la Bg_n,(\hes\varphi_i \circ f)g_n\big\ra}_{T_\lam}\Big)
	\end{multline*}
	for any positive, symmetric trace class operator $B$ on $T_\lam$, orthonormal basis ${(g_n)}_{n\in\NN}$ and $f\in T_\lam$.
\end{prp}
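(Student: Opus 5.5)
The plan is to work directly with $w:=u\circ\Psi_\lam$, which by definition of $\Psi_\lam$ is
\begin{equation*}
	w(f)=F\big(\lam(\varphi_1\circ f),\dots,\lam(\varphi_m\circ f)\big),\qquad f\in T_\lam.
\end{equation*}
This turns the problem into classical Fréchet calculus on the Hilbert space $T_\lam$. The first step is to show that each map $\Phi_i:f\mapsto\lam(\varphi_i\circ f)$ is twice Fréchet differentiable on $T_\lam$, with
\begin{equation*}
	\na\Phi_i(f)=\na^\euc\varphi_i\circ f,\qquad \partial_g\partial_g\Phi_i(f)=\la g,(\hes\varphi_i\circ f)g\ra_{T_\lam}.
\end{equation*}
Both identities come from Taylor's formula with integral remainder:
\begin{equation*}
	\varphi_i(f+g)-\varphi_i(f)-\la\na^\euc\varphi_i(f),g\ra_\euc=\int_0^1(1-t)\,\la g,\hes\varphi_i(f+tg)\,g\ra_\euc\,\d t,
\end{equation*}
applied pointwise and integrated against $\lam$.

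Boundedness of the Hessian in \eqref{eq:ubounds} makes the first-order remainder $O(\|g\|_{T_\lam}^2)$. The growth bound \eqref{eq:phibounds} gives $\na^\euc\varphi_i\circ f\in T_\lam$. For the second derivative, note that $\hes\varphi_i$ is bounded but not continuous uniformly, so I would only claim Gateaux twice-differentiability along fixed directions $g$. This follows by dominated convergence, since $|\la g,\hes\varphi_i(f+tg)g\ra_\euc|\le C|g|_\euc^2$ is $\lam$-integrable.

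The chain rule for $F\in C^2$ then gives
\begin{equation*}
	\partial_g w(f)=\sum_{i=1}^m\partial_iF(\dots)\,\la\na^\euc\varphi_i\circ f,g\ra_{T_\lam}.
\end{equation*}
Differentiating once more in the direction $g$ produces the two sums in the claimed formula: differentiating $\partial_iF$ gives the $\partial_i\partial_jF$ term, and differentiating $\la\na^\euc\varphi_i\circ f,g\ra_{T_\lam}$ gives the Hessian term.

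The second step is the Sobolev membership $w,\partial_gw\in W^{1,2}(T_\lam,G)$. I would use the growth bounds:
\begin{itemize}
	\item $|w|$ and $|\partial_gw|$ are bounded by $C(1+\|f\|_{T_\lam})\|g\|$;
	\item the gradients satisfy $\|\na w(f)\|\le C(1+\|f\|_{T_\lam})$ and $\|\na\partial_g w(f)\|\le C(1+\|f\|)^2\|g\|$, up to the constants from \eqref{eq:ubounds};
	\item these are polynomial in $\|f\|_{T_\lam}$, hence in every $L^q(G)$ by \eqref{eq:Gmom}.
\end{itemize}
I would then argue that a $G$-a.e.\ Gateaux-differentiable function with such polynomial bounds and continuous directional derivatives lies in $W^{1,2}(T_\lam,G)$. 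The route is to approximate by smooth cylindrical functions: project $f$ onto finite-dimensional spans of an orthonormal basis and mollify, then use the closability from \cite[Chap.~9]{DZ02}.

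I expect this step to be the main technical obstacle, because $\na\partial_gw$ is only Gateaux continuous, not Fréchet, in the Hessian term. The first thing I would try is to show that
\begin{equation*}
	f\mapsto\hes\varphi_i\circ f\cdot g
\end{equation*}
is continuous from $T_\lam$ into $T_\lam$ along sequences. This uses the continuity of $\hes\varphi_i$ together with dominated convergence on an a.e.\ convergent subsequence, which gives enough continuity to run the approximation. The same bounds also verify the integrability hypotheses preceding \eqref{eq:divergence}, namely $\sup_{\|g\|=1}|\partial_g\partial_gw|\in L^2(G)$ and $\|\na w\|\in L^r(G)$.

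Finally, the divergence formula follows by inserting the second-derivative formula into \eqref{eq:divergence}. Expand $B\na w=\sum_i\partial_iF\,B(\na^\euc\varphi_i\circ f)$ and apply $\partial_{g_n}$ to $\la g_n,B\na w\ra_{T_\lam}$. Summing the $\partial_i\partial_jF$ part over $n$ gives
\begin{equation*}
	\sum_n\la\na^\euc\varphi_j\circ f,g_n\ra\la g_n,B(\na^\euc\varphi_i\circ f)\ra=\la B(\na^\euc\varphi_i\circ f),\na^\euc\varphi_j\circ f\ra_{T_\lam}
\end{equation*}
by Parseval. The other part is $\sum_n\la Bg_n,(\hes\varphi_i\circ f)g_n\ra_{T_\lam}$, which converges absolutely because $B$ is trace class and $\hes\varphi_i$ is bounded.
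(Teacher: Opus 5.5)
Your proposal is correct and follows essentially the same route as the paper. Both compute the directional derivatives of $f\mapsto F(\lam(\varphi_1\circ f),\dots,\lam(\varphi_m\circ f))$ by dominated convergence using the bounded Hessians, get $W^{1,2}$-membership from the polynomial growth of $\na(u\circ\Psi_\lam)$ and $\na\partial_g(u\circ\Psi_\lam)$ together with the Gaussian moments \eqref{eq:Gmom}, and derive the divergence formula by Parseval; where the paper simply asserts Sobolev membership once the gradient norm is square-integrable, you usefully sketch the projection and mollification argument, including the correct $T_\lam$-continuity of $f\mapsto(\hes\varphi_i\circ f)g$. One small slip: $|w(f)|$ is bounded by $C(1+\|f\|_{T_\lam}^2)$, not by $C(1+\|f\|_{T_\lam})\|g\|$, but only polynomial growth is used, so this is harmless.
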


\begin{proof}
Using \eqref{eq:ubounds} and Lebesgue's Dominated Convergence, it holds for $f,g\in T_\lam$,
\begin{align}\label{eq:nauPsi}
	\frac{\d}{\d\varepsilon}(u\circ\Psi_\lam)(f+\varepsilon g)\Big|_{\varepsilon=0}&=\frac{\d}{\d\varepsilon}F\big(\lam(\varphi_1\circ (f+\varepsilon g)),\dots,\lam(\varphi_m\circ (f+\varepsilon g))\big)\Big|_{\varepsilon=0}\\
	&=\sum_{i=1}^m\partial_i F\big(\lam(\varphi_1\circ f),\dots,\lam(\varphi_m\circ f)\big)\int_{\RR^d}\frac{\d}{\d\varepsilon} \big(\varphi_i\circ (f+\varepsilon g)\big)\Big|_{\varepsilon=0}\d\lam\nonumber\\
	&=\sum_{i=1}^m\partial_i F\big(\lam(\varphi_1\circ f),\dots,\lam(\varphi_m\circ f)\big)\lam\big(\la \na^\euc\varphi_i\circ f,g\ra_\euc \big).\nonumber
\end{align}
Due to \eqref{eq:nauPsi}, \eqref{eq:ubounds} and \eqref{eq:Gmom}
we have $\|\na (u\circ\Psi_\lam)\|_{T_\lam}\in L^2(T_\lam,G)$, where
\begin{equation}\label{eq:nauPsi2}
	\na (u\circ\Psi_\lam):T_\lam\ni f\mapsto \sum_{i=1}^m\partial_i F\big(\lam(\varphi_1\circ f),\dots,\lam(\varphi_m\circ f)\big) (\na^\euc\varphi_i\circ f) \in T_\lam.
\end{equation}
In particular, $u\circ\Psi_\lam \in W^{1,2}(T_\lam,G)$.

Using \eqref{eq:ubounds}, Lebesgue's Dominated Convergence and the product rule, it holds for $f,g,h\in T_\lam$,
\begin{align*}
	&\frac{\d}{\d\varepsilon}\partial_g(u\circ\Psi_\lam)(f+\varepsilon h)\Big|_{\varepsilon=0}\\
	&=\frac{\d}{\d\varepsilon}\sum_{i=1}^m \partial_iF\big(\lam(\varphi_1\circ (f+\varepsilon h)),\dots,\lam(\varphi_m\circ (f+\varepsilon h))\big)\la \na^\euc\varphi_i\circ (f+\varepsilon h),g\ra_{T_\lam} \Big|_{\varepsilon=0}\\
	&=\Big(\sum_{j,i=1}^m \partial_j\partial_iF\big(\lam(\varphi_1\circ f),\dots,\lam\big(\varphi_m\circ f)\big)
	\la \na^\euc\varphi_i\circ f,h\ra_{T_\lam}\la \na^\euc\varphi_j\circ f,g\ra_{T_\lam}\Big)\\
	&\quad+\Big(\sum_{i=1}^m\partial_iF\big(\lam(\varphi_1\circ f),\dots,\lam\big(\varphi_m\circ f)\big)\int_{\RR^d}\frac{\d}{\d\varepsilon} \big\la \na^\euc \varphi_i\circ (f+\varepsilon h)),g\big\ra_\euc\Big|_{\varepsilon=0}\d\lam\Big)\\
	&=\Big(\sum_{j,i=1}^m \partial_j\partial_iF\big(\lam(\varphi_1\circ f),\dots,\lam\big(\varphi_m\circ f)\big)
	\la \na^\euc\varphi_i\circ f,h\ra_{T_\lam}\la \na^\euc\varphi_j\circ f,g\ra_{T_\lam}\Big)\\
	&\quad +\Big(\sum_{i=1}^m\partial_iF\big(\lam(\varphi_1\circ f),\dots,\lam\big(\varphi_m\circ f)\big)\int_{\RR^d} \big\la (\hes \varphi_i\circ  f)h,g\big\ra_\euc\d\lam\Big)
\end{align*}
By the analogous arguments as above we have  $\|\na\partial_g(u\circ\Psi_\lam)\|_{T_\lam}\in L^2(T_\lam,G)$, where
\begin{multline*}
	\na\partial_g (u\circ\Psi_\lam)(f)=\Big(\sum_{j,i=1}^m \partial_j\partial_iF\big(\lam(\varphi_1\circ f),\dots,\lam\big(\varphi_m\circ f)\big)
	\la \na^\euc\varphi_j\circ f,g\ra_{T_\lam} \na^\euc\varphi_i\circ f\Big)\\
	\quad +\Big(\sum_{i=1}^m\partial_iF\big(\lam(\varphi_1\circ f),\dots,\lam\big(\varphi_m\circ f)\big)  (\hes \varphi_i\circ  f)g\Big)
\end{multline*}
and hence $\partial_g(u\circ\Psi_\lam)\in W^{1,2}(T_\lam,G)$.
Calculating \eqref{eq:divergence} for $w:=u\circ\Psi_\lam$ yields
\begin{align*}
	&\div \big(B\na(u\circ\Psi_\lam)\big)(f)\\
	&=\Big(\sum_{n=1}^\infty\sum_{i,j=1}^m\partial_i\partial_jF\big(\lam(\varphi_1\circ f),\dots,\lam(\varphi_m\circ f)\big)
	{\big\la \na^\euc\varphi_i \circ f,Bg_n\big\ra}_{T_\lam}{\big\la\na^\euc\varphi_j \circ f,g_n\big\ra}_{T_\lam}\Big)\\
	&\qquad+\Big(\sum_{n=1}^\infty\sum_{i=1}^m\partial_iF\big(\lam(\varphi_1\circ f),\dots,\lam(\varphi_m\circ f)\big){\big\la g_n,(\hes\varphi_i \circ f)Bg_n\big\ra}_{T_\lam}\Big)\\
	&=\Big(\sum_{i,j=1}^m\partial_i\partial_jF\big(\lam(\varphi_1\circ f),\dots,\lam(\varphi_m\circ f)\big)
	{\big\la B(\na^\euc\varphi_i \circ f),\na^\euc\varphi_j \circ f\big\ra}_{T_\lam}\Big)\\
	&\qquad+\Big(\sum_{n=1}^\infty\sum_{i=1}^m\partial_iF\big(\lam(\varphi_1\circ f),\dots,\lam(\varphi_m\circ f)\big){\big\la Bg_n,(\hes\varphi_i \circ f)g_n\big\ra}_{T_\lam}\Big)
\end{align*}
as desired.
\end{proof}

\subsection{The generator $L_{G,\lam}$ of $\E^{G,\lam}$}\label{sec:gen}
We now calculate the generator of $\E^{G,\lam}$ by applying the formulas of Section \ref{sec:AB}  for the choice $B=A^{-1}$, the covariance operator of $G$.
Let $w\in L^2(T_\lam,G)$. 
The conditional expectation $\EE_G[w|\Psi_\lam]$ onto the $\sigma$-algebra generated by $\Psi_\lam$ coincides with the orthogonal projection of $w$ onto the subspace
\begin{equation*}
	\big\{u\circ\Psi_\lam:u\in L^2(\scr P_2,G\circ\Psi_\lam^{-1})\big\}\subset  L^2(T_\lam,G).
\end{equation*}
We write
\begin{equation*}
	\mu\mapsto \EE_G[w|\Psi_\lam=\mu ]
\end{equation*}
for the measurable function $\scr P_2\to\RR$ (unique in $(G\circ\Psi_\lam^{-1})$-a.e.~sense) such that
\begin{equation*}
	\EE_G[w|\Psi_\lam=\,\cdot\,](\Psi_\lam(f))=\EE_G[w|\Psi_\lam](f),\qquad G\text{-a.e.~}f\in T_\lam.
\end{equation*}

Next, for given $\varphi_1,\varphi_2\in C^2(\RR^d)$ such that $\sup_{x\in\RR^d}\big(|\partial_k\partial_l \varphi_i(x)|\big)<\infty,$ $ 1\leq k,l\leq d$, $1\leq i\leq 2$, we define  $b(\varphi_1,\varphi_2),a(\varphi_1)\in L^2(T_\lam,G)$ as 
\begin{equation*}
	b(\varphi_1,\varphi_2):T_\lam\ni f\mapsto{\big\la (\na^\euc\varphi_1 \circ f),A^{-1}(\na^\euc\varphi_2 \circ f)\big\ra}_{T_\lam}
\end{equation*}
respectively
\begin{equation}\label{eq:hess}
	a(\varphi_1):T_\lam\ni f\mapsto \sum_{n=1}^\infty{\big\la A^{-1}g_n,(\hes\varphi_1 \circ f)g_n\big\ra}_{T_\lam},
\end{equation}
where ${(g_n)}_{n\in\NN}$ is an orthonormal basis in $T_\lam$ and $(\hes\varphi_1 \circ f)g_n\in T_\lam$ is the  matrix-vector product
\begin{equation*}
(\hes\varphi_1 \circ f)g_n:\RR^d\ni x\,\mapsto\, \hes\varphi_1  (f(x))g_n(x)\in\RR^d.
\end{equation*}
\eqref{eq:hess} does not depend on the choice of an orthonormal basis as it coincides with the trace of the operator
\begin{equation*}
	T_\lam\ni g\mapsto A^{-1}(\hes\varphi_1 \circ f)g\in T_\lam
\end{equation*}
for $f\in\ T_\lam$.

The main result of Section \ref{sec:generator}, stated in the theorem below, characterizes the generator  $(L_{G,\lam},\D(L_{G,\lam}))$ of $(\E^{G,\lam},\D(\E^{G,\lam}))$ in $L^2(\scr P_2,G\circ\Psi_\lam^{-1})$.

\begin{thm}\label{thm:thmL}
	If $u$ is given as in \eqref{eq:u}, \eqref{eq:ubounds}, then $u\in \D(L_{G,\lam})$ and 
	\begin{align}\label{eq:generator}
		L_{G,\lam}u(\mu)=&\Big(\sum_{i=1}^m\partial_iF\big(\mu(\varphi_1),\dots,\mu(\varphi_m)\big)\big(\EE_G[a(\varphi_i)|\Psi_\lam=\mu]-{\la\id_{\RR^d},\na\varphi_i\ra}_{T_\mu}\big)\Big)\\
		&+\Big(\sum_{i,j=1}^m\partial_i\partial_jF\big(\mu(\varphi_1),\dots,\mu(\varphi_m)\big)\EE_G[b(\varphi_i,\varphi_j)|\Psi_\lam=\mu]\Big),\nonumber
	\end{align}
	$(G\circ\Psi_\lam^{-1})$-a.e.~$\mu\in \P_2$.
\end{thm}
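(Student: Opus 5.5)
To show $u\in\D(L_{G,\lam})$ with the stated formula, I will verify the defining identity of the generator of a closed symmetric form: for all $v$ in a core of $\E^{G,\lam}$,
\begin{equation*}
\E^{G,\lam}(u,v)=-\int_{\P_2}v\,\Phi\,\d(G\circ\Psi_\lam^{-1}),
\end{equation*}
where $\Phi\in L^2(\P_2,G\circ\Psi_\lam^{-1})$ is the right-hand side of \eqref{eq:generator}. Since $C_b^1(\P_2)$ is by construction dense in $\D(\E^{G,\lam})$ with respect to the form norm, it suffices to take $v\in C_b^1(\P_2)$. I would also check that $u$ itself lies in $\D(\E^{G,\lam})$, although it need not be in $C_b^1(\P_2)$ because $\varphi_i$ may grow quadratically. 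For this, approximate $u$ by cylinder functions whose $\varphi_i$ are replaced by bounded smooth truncations $\varphi_i^{(k)}$ with uniformly controlled first and second derivatives. Then use \eqref{eq:nauPsi2}, the moment bound \eqref{eq:Gmom} and dominated convergence in $W^{1,2}_{A^{-1}}(T_\lam,G)$, together with Remark \ref{rem:imageForm}.

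Next I lift to $T_\lam$. By Remark \ref{rem:imageForm} and \eqref{eq:chainR}, $\E^{G,\lam}(u,v)=\int_{T_\lam}\la\na(v\circ\Psi_\lam),A^{-1}\na(u\circ\Psi_\lam)\ra_{T_\lam}\,\d G$. I then apply the integration by parts \eqref{eq:AB} with $B=A^{-1}$, so that $AB=\id$ is bounded, and $w=u\circ\Psi_\lam$. The hypotheses on $w$ are provided by Proposition \ref{prp:divAB}. Indeed, \eqref{eq:nauPsi2} and \eqref{eq:phibounds} give $\|\na w(f)\|_{T_\lam}\le C(1+\|f\|_{T_\lam})$, hence $\|\na w\|_{T_\lam}\in L^r(T_\lam,G)$ for every $r$ by \eqref{eq:Gmom}. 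The explicit second derivative gives $\sup_{\|g\|=1}|\partial_g\partial_g w(f)|\le C(1+\|f\|_{T_\lam}^2)$, which is in $L^2(T_\lam,G)$. This yields
\begin{equation*}
\E^{G,\lam}(u,v)=-\int_{T_\lam}v\circ\Psi_\lam\,\big[\div(A^{-1}\na w)-\la\na w,f\ra_{T_\lam}\big]\,G(\d f).
\end{equation*}

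Now I identify the bracket. Proposition \ref{prp:divAB} with $B=A^{-1}$ gives
\begin{equation*}
\div(A^{-1}\na w)(f)=\sum_{i,j}\partial_i\partial_jF(\cdot)\,b(\varphi_i,\varphi_j)(f)+\sum_i\partial_iF(\cdot)\,a(\varphi_i)(f),
\end{equation*}
with $F$'s arguments equal to $\lam(\varphi_k\circ f)=\Psi_\lam(f)(\varphi_k)$. By \eqref{eq:nauPsi2},
\begin{equation*}
\la\na w(f),f\ra_{T_\lam}=\sum_i\partial_iF(\cdot)\int\la\na^\euc\varphi_i(f(x)),f(x)\ra_\euc\,\lam(\d x)=\sum_i\partial_iF(\cdot)\,\la\id_{\RR^d},\na\varphi_i\ra_{T_{\Psi_\lam(f)}}.
\end{equation*}
So the coefficients $\partial_iF(\cdot)$, $\partial_i\partial_jF(\cdot)$ and the drift term are all functions of $\Psi_\lam(f)$. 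The only parts that are not $\sigma(\Psi_\lam)$-measurable are $a(\varphi_i)$ and $b(\varphi_i,\varphi_j)$. Since $v\circ\Psi_\lam$ is $\sigma(\Psi_\lam)$-measurable and bounded, I replace $a(\varphi_i)$ and $b(\varphi_i,\varphi_j)$ by their conditional expectations $\EE_G[\cdot\,|\Psi_\lam]$ inside the integral. This requires checking that the products with the coefficient functions are integrable, which follows from $|\partial_iF|,|\partial_i\partial_jF|$ bounded and $a,b\in L^2$. Finally, pushing forward to $\P_2$ gives exactly $-\int v\,\Phi\,\d(G\circ\Psi_\lam^{-1})$.

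I expect the main obstacle to be the domain question rather than the calculation. One part is justifying that $u\in\D(\E^{G,\lam})$, via the truncation approximation above. The other is checking that $\Phi\in L^2$. For the latter, the key estimates are $|a(\varphi_i)(f)|\le\tra(A^{-1})\sup|\hes\varphi_i|$ and $|b(\varphi_i,\varphi_j)(f)|\le\|A^{-1}\|\,C(1+\|f\|_{T_\lam})^2$, combined with Jensen's inequality for the conditional expectation and \eqref{eq:Gmom}.
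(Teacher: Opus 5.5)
Your proposal follows the paper's proof: you first establish $u\in\D(\E^{G,\lam})$ by approximation, then for $v\in C_b^1(\P_2)$ you lift to $T_\lam$, apply \eqref{eq:AB} with $B=A^{-1}$ together with Proposition~\ref{prp:divAB} and \eqref{eq:nauPsi2}, replace $a(\varphi_i)$ and $b(\varphi_i,\varphi_j)$ by their conditional expectations, and push forward. The only difference is the domain step, which is equally valid. The paper uses a two-stage approximation (the cutoff $\kappa(\cdot/n)\varphi_i$ for bounded $\varphi_i$, then $\tau_n\circ\varphi_i$) combined with the weak-compactness lemma \cite[Lem.~I.2.12]{MR92}; your single cutoff such as $\kappa(\cdot/k)\varphi_i$ gives strong convergence by dominated convergence instead. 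You also make explicit the integrability hypotheses of \eqref{eq:AB} and the fact that $\Phi\in L^2$, which the paper leaves implicit.
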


\begin{proof}
	Let $u$ be as in the assumptions. First, we convince ourselves that $u\in \D(\E^{G,\lam})$.
	
	 If $u$ is given as in \eqref{eq:u}, \eqref{eq:ubounds} and additionally 
	$\varphi_i\in C_b^1(\RR^d)$, $1\leq i\leq m$, then $u\in C_b^1(\P_2)$ follows from 
	\begin{align*}
			&\frac{\d}{\d\varepsilon}u\big(\mu\circ(\id_{\RR^d}+\varepsilon g)^{-1}\big)\Big|_{\varepsilon=0}\\
			&=\frac{\d}{\d\varepsilon}F\big(\mu(\varphi_1\circ (\id_{\RR^d}+\varepsilon g)),\dots,\mu(\varphi_m\circ (\id_{\RR^d}+\varepsilon g))\big)\Big|_{\varepsilon=0}\\
		&=\sum_{i=1}^m\partial_i F\big(\mu(\varphi_1),\dots,\mu(\varphi_m)\big)\int_{\RR^d}\frac{\d}{\d\varepsilon}\varphi_i (x+\varepsilon g(x))\Big|_{\varepsilon=0}\mu(\d x)\\
		&=\sum_{i=1}^m\partial_i F\big(\mu(\varphi_1),\dots,\mu(\varphi_m)\big)\mu\big(\la \na^\euc\varphi_i,g\ra_\euc \big)\\
		&=\sum_{i=1}^m\partial_i F\big(\mu(\varphi_1),\dots,\mu(\varphi_m)\big)\la \na^\euc\varphi_i,g\ra_{T_\mu},\qquad g\in T_\mu,\,\mu\in\P_2,
	\end{align*}
	i.e.~$Du(\mu)=\sum_{i=1}^m\partial_i F\big(\mu(\varphi_1),\dots,\mu(\varphi_m)\big) \na^\euc\varphi_i$, $\mu\in\P_2$.
	
	Let $\kappa\in C^1(\RR^d)$ such that $\eins_{[-1,1]^d}(x)\leq \kappa(x)\leq \eins_{[-3,3]^d}(x)$, $|\na^\euc\kappa(x)|_\euc\leq 1$ for $x\in\RR$.
	If $u$ is given as in \eqref{eq:u}, \eqref{eq:ubounds} and additionally 
	$\varphi_i$ are bounded functions, $1\leq i\leq m$, then
	\begin{equation*}
		F\Big(\int_{\RR^d}\kappa\big(\tfrac x n\big)\varphi_1(x)\mu(\d x),\dots,\int_{\RR^d}\kappa\big(\tfrac x n\big)\varphi_m(x)\mu(\d x)\Big)=:u_n(\mu)\overset{n\to\infty}{\longrightarrow }u(\mu),
		\qquad \mu\in\P_2,
	\end{equation*}
	and moreover 
	\begin{equation*}
		\|Du_n(\mu)\|_{T_\mu}^2\leq C\max_{1\leq i\leq m}\mu\Big(\big|\na^\euc(\kappa(\tfrac \cdot n)\varphi_i)\big|_\euc^2\Big)
		\leq C\max_{1\leq i\leq m}\mu\Big(\big(\tfrac{1}{n^2}|\varphi_i|+|\na^\euc\varphi_i|_\euc\big)^2\Big),\quad \mu\in\P_2,
	\end{equation*}
	where $C:=m^2 \max_{1\leq i\leq m}(\sup_{y\in\RR^m}|\partial_i F(y)|^2)$.
	 Therefore, in case $\varphi_i$ are bounded functions,  \sloppy\cite[Lem.~I.2.12]{MR92} yields $u\in\D(\E^{G,\lam})$ and
	\begin{align}\label{eq:inbetween}
		\E^{G,\lam}(u,u)&\leq\liminf_{n\to\infty}\E^{G,\lam}(u_n,u_n)\\&\nonumber\leq \|A^{-1}\|_{T_\lam,\text{Op}}\liminf_{n\to\infty}\int_{\P_2}\|Du_n(\mu)\|_{T_\mu}^2(G\circ\Psi_\lam^{-1})(\d\mu)
		\\&\leq C\|A^{-1}\|_{T_\lam,\text{Op}}\max_{1\leq i\leq m}\int_{\P_2}\mu\big(|\na^\euc\varphi_i|_\euc^2\big)(G\circ\Psi_\lam^{-1})(\d\mu),\nonumber
	\end{align}
	where $ \|A^{-1}\|_{T_\lam,\text{Op}}$ denotes the operator norm of ${A^{-1}}$ and
	  the integral on the right-hand side of \eqref{eq:inbetween} exists due to \eqref{eq:phibounds} and \eqref{eq:Gmom}.
	
	Now, let ${(\tau_n)}_{n\in\NN}\subseteq  C_b^1(\RR)$ be a sequence such that $\sup_{n\in\NN}|\tau_n'(s)|\leq 1$ for $s\in\RR$ and  $\tau_n(s)=s$ for $s\in[-n,n]$.
	For general $u$, given as in \eqref{eq:u}, \eqref{eq:ubounds}, we have
	\begin{equation*}
		F\big(\mu(\tau_n\circ\varphi_1),\dots,\mu(\tau_n\circ\varphi_m)\big)=:\tilde u_n(\mu)\overset{n\to\infty}{\longrightarrow }u(\mu),
		\qquad \mu\in\P_2,
	\end{equation*}
	and moreover by \eqref{eq:inbetween},
	\begin{align*}
		\E^{G,\lam}(\tilde u_n,\tilde u_n)&\leq C\|A^{-1}\|_{T_\lam,\text{Op}}\max_{1\leq i\leq m}\int_{\P_2}\mu\Big(\big|\na^\euc(\tau_n\circ\varphi_i)\big|_\euc^2\Big)(G\circ\Psi_\lam^{-1})(\d\mu)\\
		&\leq C\|A^{-1}\|_{T_\lam,\text{Op}}\max_{1\leq i\leq m}\int_{\P_2}\mu\big(|\na^\euc\varphi_i|_\euc^2\big)(G\circ\Psi_\lam^{-1})(\d\mu),\quad \mu\in\P_2,\,n\in\NN,
	\end{align*}
	for the same constant $C$ as above. Also, $|\tilde u_n(\mu)|\leq \tilde C(1+\mu(|\cdot|_\euc^2))$, $\mu\in\P_2$, for some constant $\tilde C\in(0,\infty)$ independent from $n$,
	by Lipschitz continuity of $F$, the contractive property of $\tau_n$, as well as \eqref{eq:phibounds}.
	Again by  \cite[Lem.~I.2.12]{MR92},
	\begin{equation*}
		u\in\D(\E^{G,\lam}),\qquad \E^{G,\lam}(u,u)\leq C\|A^{-1}\|_{T_\lam,\text{Op}}\max_{1\leq i\leq m}\int_{\P_2}\mu\big(|\na^\euc\varphi_i|_\euc^2\big)(G\circ\Psi_\lam^{-1})(\d\mu).
	\end{equation*}

	Now, having verified that $u\in\D(\E^{G,\lam})$ we address the proof of \eqref{eq:generator}, where $u$ is as in the assumptions of the theorem. Let $v\in C_b^1(\P_2)$. Analogously as in the proof of Lemma \ref{lem:closable}, we have $v\circ\Psi_\lam\in W^{1,2}(T_\lam,G)$ and by Remark \ref{rem:imageForm},
	\begin{equation*}
		\E^{G,\lam}(v,u)={\la v\circ\Psi_\lam,u\circ\Psi_\lam\ra}_{W^{1,2}_{A^{-1}}(T_\lam, G)}.
	\end{equation*}
	In the proof of Proposition \ref{prp:divAB}, we have seen $u\circ\Psi_\lam\in W^{1,2}(T_\lam,G)$.
	Hence,
	\begin{equation*}
		\E^{G,\lam}(v,u)=\int_{T_\lam}{\big\la\na(v\circ\Psi_\lam)(f),A^{-1}\na(u\circ\Psi_\lam)(f)\big\ra}_{T_\lam} G(\d f).
	\end{equation*}
	Now, we  use \eqref{eq:AB}, then Proposition \ref{prp:divAB} and \eqref{eq:nauPsi2} to obtain
\begin{align*}
	&\E^{G,\lam}(v,u)
	=-\int_{T_\lam}(v\circ\Psi_\lam)(f)\Big[\textnormal{div}\big(A^{-1}\na(u\circ\Psi_\lam)\big)(f)-{\big\la \na(u\circ\Psi_\lam)(f),f\big\ra}_{T_\lam}\Big]G(\d f)\\
	&=-\int_{T_\lam}(v\circ\Psi_\lam)(f)\bigg[\Big(\sum_{i=1}^m\partial_iF\big(\lam(\varphi_1\circ f),\dots,\lam(\varphi_m\circ f)\big)\big(a(\varphi_i)(f)-{\la f,\na\varphi_i\circ f\ra}_{T_\lam}\big)\Big)\\
	&\qquad\qquad\qquad\qquad\qquad+\Big(\sum_{i,j=1}^m\partial_i\partial_jF\big(\lam(\varphi_1\circ f),\dots,\lam(\varphi_m\circ f)\big)b(\varphi_i,\varphi_j)(f)\Big)\bigg]G(\d f)\\
	&=-\int_{T_\lam}(v\circ\Psi_\lam)(f)\bigg[\Big(\sum_{i=1}^m\partial_iF\big(\Psi_\lam(f)(\varphi_1),\dots,\Psi_\lam(f)(\varphi_m)\big)\Big(\EE_G[a(\varphi_i)|\Psi_\lam](f)-
	{\la \id_{\RR^d},\na\varphi_i\ra}_{T_{\Psi_\lam(f)}}\Big)\Big)\\
	&\qquad\qquad\qquad\qquad\qquad+\Big(\sum_{i,j=1}^m\partial_i\partial_jF\big(\Psi_\lam(f)(\varphi_1),\dots,\Psi_\lam(f)(\varphi_m)\big)\EE_G[b(\varphi_i,\varphi_j)|\Psi_\lam](f)\Big)\bigg]G(\d f)\\
	&=-\int_{T_\lam}v(\mu)\bigg[\Big(\sum_{i=1}^m\partial_iF\big(\mu(\varphi_1),\dots,\mu(\varphi_m)\big)\Big(\EE_G[a(\varphi_i)|\Psi_\lam=\mu]-
	{\la \id_{\RR^d},\na\varphi_i\ra}_{T_\mu}\Big)\Big)\\
	&\qquad\qquad\qquad\qquad+\Big(\sum_{i,j=1}^m\partial_i\partial_jF\big(\mu(\varphi_1),\dots,\mu(\varphi_m)\big)\EE_G[b(\varphi_i,\varphi_j)|\Psi_\lam=\mu]\Big)\bigg](G\circ\Psi_\lam^{-1})(\d \mu).
\end{align*}
This concludes the proof.
\end{proof}

The subsequent corollary is an application of Theorem \ref{thm:thmL} for some particularly relevant choices of $u$.
For $l\in\{1,\dots,d\}$ we denote the projection $\RR^d\ni x\mapsto x_l\in\mathbb R$ onto the $l$-th coordinate by $\pi_l$.
 For $\mu\in\P_2$, $k,l\in\{1,\dots,d\}$,  we set
\begin{equation}\label{eq:momentfct}
	M^{(1)}_l(\mu):=\mu(\pi_l),\qquad M^{(2)}_{k,l}(\mu):=\mu(\pi_k\cdot \pi_l)\qquad\text{and}\qquad |\mu|_2:={\mu(|\cdot|_\euc^2)}^{\frac 1 2}.
\end{equation}
Moreover, let ${(g_n)}_{n\in\NN}$ be an orthonormal basis in $T_\lam$ and
\begin{equation*}
	C_{k,l}:=\sum_{n=1}^\infty\lam\Big((\pi_k\circ g_n)\big(\pi_l\circ  (A^{-1}g_n)\big)\Big).
\end{equation*}
The constant $C_{l,k}$ does not depend on the choice of orthogonal basis and $\sum_{l=1}^dC_{ll}=\tra(A^{-1})$, the trace of $A^{-1}$.

\begin{cor}\label{cor:normSq}
	$\D(L_{G,\lam})$ contains every function defined in \eqref{eq:momentfct}  and
	\begin{align}\label{eq:normSq}
		L_{G,\lam}M^{(1)}_l(\mu)&=-M^{(1)}_l(\mu),\\ L_{G,\lam}M^{(2)}_{k,l}(\mu)&=-2M^{(2)}_{k,l}(\mu)+2C_{k,l},\nonumber\\
		L_{G,\lam}\big(|\cdot|_2^2\big)(\mu)&=-2|\mu|_2^2+2\tra (A^{-1})\nonumber
	\end{align}
	for $(G\circ\Psi_\lam^{-1})$-a.e.~$\mu\in\P_2$.
\end{cor}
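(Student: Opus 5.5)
The plan is to apply Theorem \ref{thm:thmL} with $m=1$, $F=\id_{\RR}$, and $\varphi_1\in\{\pi_l,\ \pi_k\pi_l,\ |\cdot|_\euc^2\}$. The function $F(y)=y$ does not satisfy $\sup|F|<\infty$, but \eqref{eq:ubounds} only asks for bounded first and second derivatives of $F$, so this is admissible. Each choice of $\varphi_1$ is a polynomial of degree at most two and hence has bounded Hessian. Theorem \ref{thm:thmL} therefore gives $u\in\D(L_{G,\lam})$ together with formula \eqref{eq:generator}. Since $\partial_1\partial_1F=0$, the $b$-term drops out, and what remains is
\begin{equation*}
L_{G,\lam}u(\mu)=\EE_G[a(\varphi_1)|\Psi_\lam=\mu]-{\la\id_{\RR^d},\na^\euc\varphi_1\ra}_{T_\mu}.
\end{equation*}

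Next I would compute both terms for each $\varphi_1$. For $\varphi_1=\pi_l$ we have $\hes\pi_l=0$, so $a(\pi_l)=0$. Also $\na^\euc\pi_l=e_l$, hence $\la\id,e_l\ra_{T_\mu}=\mu(\pi_l)=M^{(1)}_l(\mu)$, which gives the first identity.

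For $\varphi_1=\pi_k\pi_l$ we have $\na^\euc\varphi_1(x)=x_le_k+x_ke_l$, so $\la\id,\na^\euc\varphi_1\ra_{T_\mu}=2\mu(\pi_k\pi_l)$. The Hessian is the constant matrix $H=e_ke_l^\tr+e_le_k^\tr$, so $a(\varphi_1)(f)$ does not depend on $f$, and its conditional expectation is that same constant. Computing it:
\begin{equation*}
a(\varphi_1)=\sum_n\la A^{-1}g_n,Hg_n\ra_{T_\lam}=\sum_n\lam\big((\pi_k\circ A^{-1}g_n)(\pi_l\circ g_n)+(\pi_l\circ A^{-1}g_n)(\pi_k\circ g_n)\big)=C_{l,k}+C_{k,l}.
\end{equation*}
It remains to show $C_{k,l}=C_{l,k}$. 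This follows from self-adjointness of $A^{-1}$: $C_{k,l}$ is the trace of $P_lA^{-1}P_k^*$, where $P_k:T_\lam\to L^2(\lam)$, $g\mapsto\pi_k\circ g$, and the trace is cyclic and invariant under taking adjoints. The same representation shows that $C_{k,l}$ is basis-independent. With this, $L_{G,\lam}M^{(2)}_{k,l}=2C_{k,l}-2M^{(2)}_{k,l}$.

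For the third identity, $|\cdot|^2_\euc=\sum_l\pi_l^2$, so it follows either by linearity of $L_{G,\lam}$ or directly: the Hessian is $2I$, giving $a=2\tra(A^{-1})$, and $\la\id,2\id\ra_{T_\mu}=2|\mu|_2^2$.

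The main obstacle is checking that the theorem's hypotheses really allow $F=\id$ and unbounded, quadratically growing $\varphi_i$. The statement of \eqref{eq:ubounds} is written exactly to allow this, and the approximation arguments inside Theorem \ref{thm:thmL} (via the $\tau_n$ truncation) already handle it. The only other point needing care is the trace and symmetry argument for $C_{k,l}$; the trace-class property of $A^{-1}$ guarantees that the series converges absolutely.
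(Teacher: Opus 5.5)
Your proposal is correct and follows the route the paper intends: the corollary is stated as a direct application of Theorem~\ref{thm:thmL} with $m=1$ and $F(y)=y$, so the $b$-term vanishes, and with $\varphi_1\in\{\pi_l,\pi_k\pi_l,|\cdot|_\euc^2\}$, for which $a(\varphi_1)$ is constant in $f$. Your check that $a(\pi_k\pi_l)=C_{k,l}+C_{l,k}=2C_{k,l}$ supplies the one detail the paper leaves implicit; alternatively, in an eigenbasis of $A^{-1}$ one has $C_{k,l}=\lam(q_{k,l})$, which is visibly symmetric in $k,l$.
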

The representatives on the right-hand side of \eqref{eq:normSq} are continuous in $\mu\in\P_2$.

\subsection{$L_{G,\lam}$ as a second order differential operator}\label{sec:diffOp}

In this paragraph, in particular Proposition \ref{prp:diffO} below, $L_{G,\lam}$ is characterized
 as a second order differential operator. 
We consider 
 \begin{equation}\label{eq:uphi}
 	\varphi\in C^2(\RR^d),\,\sup_{x\in\RR^d}|\partial_k\partial_l\varphi(x)|<\infty,\,1\leq k,l\leq d,\qquad u:\P_2\ni\mu\mapsto\mu(\varphi),
 \end{equation}
and show $L_{G,\lam}u(\mu)=\mu(\scr L_{G,\lam}^\mu\varphi)$, $G\circ\Psi_\lam^{-1}\text{-a.e.~}\mu\in\P_2$, with
\begin{equation}\label{eq:Ldiff}
	\scr L_{G,\lam}^\mu \varphi(x)=\sum_{k=1}^d\Big[\Big(\sum_{l=1}^d\rho^\mu_{k,l}(x)\partial_k\partial_l\varphi(x)\Big)-x_k\partial_k\varphi(x)\Big],
	\qquad \mu\text{-a.e.~}x\in\RR^d,
\end{equation}
and coefficients $\rho^\mu_{k,l}\in L^1(\RR^d,\mu)$, depending only on $\mu\in\P_2$, $1\leq k,l\leq d$, and the underlying Gaussian $G$. 
This representation of $L_{G,\lam}$ is a consequence of Theorem \ref{thm:thmL}. More precisely, $\rho^\mu_{k,l}$ are  as follows.

We denote by ${(g_n)}_{n\in\NN}\subset T_\lam$ an eigenbasis of the covariance operator $A^{-1}$ of $G$, $A_n^{-1}g_n=\alpha_n  g_n$ for some $\alpha_n\in(0,\infty)$.  
Since
\begin{equation*}
	\tra(A^{-1})=\sum_{n\in\NN}\int_{\RR^d}\alpha_n\la g_n,g_n\ra_\euc^2\d\lam\geq \sup_{N\in\NN}\int_{\RR^d}\sum_{n=1}^N \alpha_n|\pi_k\circ g_n||\pi_l\circ g_n|\d\lam
\end{equation*}
for $1\leq k,l\leq d$, we may define
\begin{equation*}
	q_{k,l}:=\sum_{n\in\NN}\alpha_n(\pi_k\circ g_n)(\pi_l\circ g_n)\in L^1(\RR^d,\lambda).
\end{equation*}
We understand $q_{k,l}$ as the element in $L^1(T_\lam\times\RR^d,G\times\lam)$,$$T_\lam\times\RR^d\ni (f,x)\mapsto q_{k,l}(x)\in\RR,$$  and set
\begin{equation}\label{eq:vecPsilam}
\vec\Psi_\lam(f,x):T_\lam\times\RR^d\ni(f,x)\longmapsto \big(\Psi_\lam(f),f(x)\big)\in\P_2\times\RR^d.
\end{equation}
Now, let $\P_2\times\RR^d\ni (\mu,x)\mapsto \rho^\mu_{k,l}(x)\in\RR$ denote a measurable function, which composed with $\vec\Psi_\lam$ yields the conditional expectation of
$q_{k,l}$ onto the $\sigma$-algebra generated by $\vec\Psi_\lam$, i.e.
\begin{equation*}
	\rho_{k,l}^{\lam\circ f^{-1}}(f(x)):=\EE_{G\times\lam}\big[q_{k,l}|\vec\Psi_\lam\big](f,x),\qquad \lam\text{-a.e.~}x\in\RR^d,\,G\text{-a.e.~}f\in T_\lam.
\end{equation*} 
That means, $\{\rho^\mu_{k,l}:\mu\in\P_2\}$ are determined by the equation
\begin{multline}\label{eq:rhoKL}
	\int_{\P_2}\int_{\RR^d}h(\mu,x)\rho^\mu_{k,l}(x) \mu(\d x)(G\circ\Psi_\lam^{-1})(\d\mu)\\=
	\int_{T_\lam}\int_{\RR^d} h\big(\vec\Psi_\lam(f,x)\big)q_{k,l}(x)\lam(\d x)G(\d f)\quad\text{for all bounded measurable }h:\P_2\times\RR^d\to\RR,
\end{multline}
or equivalently, $\P_2\times\RR^d\ni (\mu,x)\mapsto \rho^\mu_{k,l}(x)\in\RR$ coincides with the Radon-Nikodym derivative of a finite, signed measure,
$$\rho^\mu_{k,l}(x)=\frac{\d\big((G\times q_{k,l}\lam)\circ\vec\Psi_\lam^{-1}\big)}{\d \big((G\times \lam)\circ\vec\Psi_\lam^{-1}\big)}\,(\mu,x).$$

\begin{rem}\label{rem:trace}
	For $\mu\in\P_2$, we set  $\tra(\rho^{\mu}_{\cdot,\cdot}):=\sum_{k=1}^d\rho^\mu_{k,k}\in L^1(\RR^d,\mu)$.
	Then,
	\begin{equation*}
		\mu\big(\tra(\rho^{\mu}_{\cdot,\cdot})\big)=\tra(A^{-1})\qquad\text{for } (G\circ\Psi_\lam^{-1})\text{-a.e.~}\mu\in\P_2,
	\end{equation*}
	as follows from \eqref{eq:rhoKL} and $\sum_{k=1}^d\lam(q_{k,k})=\tra(A^{-1})$.
\end{rem}
\begin{prp}\label{prp:diffO}
	For $\varphi$, $u$  as in \eqref{eq:uphi} the generator of $(\E^{G,\lam},\D(\E^{G,\lam}))$ is given by $$L_{G,\lam}u(\mu)=\mu(\scr L_{G,\lam}^\mu\varphi),$$ $G\circ\Psi_\lam^{-1}\text{-a.e.~}\mu\in\P_2$,  with second-order differential operator $\scr L_{G,\lam}^\mu$ as in  \eqref{eq:Ldiff}.
\end{prp}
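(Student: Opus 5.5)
Theorem \ref{thm:thmL} with $m=1$ and $F=\id_\RR$ (so that $\partial_1F\equiv1$ and $\partial_1\partial_1F\equiv0$) applies directly to $u(\mu)=\mu(\varphi)$. One caveat: $F(y)=y$ is not bounded, but \eqref{eq:ubounds} only requires bounded first and second derivatives of $F$, so it holds. The theorem then gives $u\in\D(L_{G,\lam})$ and
\begin{equation*}
	L_{G,\lam}u(\mu)=\EE_G[a(\varphi)|\Psi_\lam=\mu]-{\la\id_{\RR^d},\na^\euc\varphi\ra}_{T_\mu}
\end{equation*}
for $(G\circ\Psi_\lam^{-1})$-a.e.~$\mu$. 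The second term equals $\mu\big(\sum_k x_k\partial_k\varphi\big)$, which is exactly the drift part of $\mu(\scr L^\mu_{G,\lam}\varphi)$. It therefore remains to show
\begin{equation*}
	\EE_G[a(\varphi)|\Psi_\lam=\mu]=\mu\Big(\sum_{k,l=1}^d\rho^\mu_{k,l}\,\partial_k\partial_l\varphi\Big)
\end{equation*}
for a.e.~$\mu$.

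First I compute $a(\varphi)(f)$ in the eigenbasis $(g_n)$ of $A^{-1}$, where $A^{-1}g_n=\alpha_ng_n$. By \eqref{eq:hess},
\begin{equation*}
	a(\varphi)(f)=\sum_n\alpha_n\lam\big(\la g_n,(\hes\varphi\circ f)g_n\ra_\euc\big)=\sum_n\alpha_n\int_{\RR^d}\sum_{k,l}\partial_k\partial_l\varphi(f(x))\,(\pi_k\circ g_n)(x)(\pi_l\circ g_n)(x)\,\lam(\d x).
\end{equation*}
The Hessian is bounded, say by $c$, and $\sum_n\alpha_n\lam(|g_n|_\euc^2)=\tra(A^{-1})<\infty$. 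Hence the series converges absolutely in $L^1(\lam)$ uniformly in $f$, and I may interchange sum and integral to get
\begin{equation*}
	a(\varphi)(f)=\int_{\RR^d}\sum_{k,l}\partial_k\partial_l\varphi(f(x))\,q_{k,l}(x)\,\lam(\d x),
\end{equation*}
with the bound $|a(\varphi)(f)|\le c\,d^2\tra(A^{-1})$.

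Next I identify the conditional expectation by testing against $v\circ\Psi_\lam$ for a bounded measurable $v:\P_2\to\RR$. Fubini gives
\begin{equation*}
	\int_{T_\lam}v(\Psi_\lam(f))\,a(\varphi)(f)\,G(\d f)=\sum_{k,l}\int_{T_\lam}\int_{\RR^d}h_{k,l}\big(\vec\Psi_\lam(f,x)\big)\,q_{k,l}(x)\,\lam(\d x)\,G(\d f),
\end{equation*}
where $h_{k,l}(\mu,y):=v(\mu)\partial_k\partial_l\varphi(y)$ is bounded and measurable. By \eqref{eq:rhoKL}, the right-hand side equals
\begin{equation*}
	\int_{\P_2}v(\mu)\,\mu\Big(\sum_{k,l}\rho^\mu_{k,l}\,\partial_k\partial_l\varphi\Big)\,(G\circ\Psi_\lam^{-1})(\d\mu).
\end{equation*}
The map $\mu\mapsto\mu\big(\sum\rho^\mu_{k,l}\partial_k\partial_l\varphi\big)$ is integrable, because $\rho_{k,l}$ is $L^1$ with respect to $(G\times\lam)\circ\vec\Psi_\lam^{-1}$ and $q_{k,l}\in L^1$. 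It is measurable in $\mu$ by Fubini applied to the joint measurability of $(\mu,x)\mapsto\rho^\mu_{k,l}(x)$. By the defining property of the conditional expectation, this function is therefore a version of $\EE_G[a(\varphi)|\Psi_\lam=\cdot\,]$, and the claim follows. The identity also shows $\rho^\mu_{k,l}\in L^1(\mu)$ for a.e.~$\mu$.

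The main obstacle is the measure-theoretic bookkeeping in the conditional expectation step: justifying the interchange of the series with the integrals, and the passage from \eqref{eq:rhoKL} to the $\mu$-wise integral (disintegrating over $\mu$ via Fubini on $\P_2\times\RR^d$). The bounds on the Hessian and the finite trace of $A^{-1}$ supply the needed dominations, so this should work.
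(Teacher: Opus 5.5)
Your proposal is correct and follows essentially the same route as the paper. The paper also applies Theorem~\ref{thm:thmL} with $m=1$, $F=\id_\RR$, and identifies $\EE_G[a(\varphi)|\Psi_\lam=\mu]$ by testing against bounded $v(\Psi_\lam(f))$, rewriting $a(\varphi)$ through $q_{k,l}$ and invoking \eqref{eq:rhoKL}. Your explicit justification of the series--integral interchange via the trace bound, and of integrability, fills in details the paper leaves implicit.
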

\begin{proof}
	Let $a(\varphi)$ be as in \eqref{eq:hess}. For bounded, measurable $v:\P_2\to\RR$, we have by \eqref{eq:rhoKL},
	\begin{align*}
		&\int_{\P_2}\EE_G[a(\varphi)|\Psi_\lam=\mu]v(\mu)(G\circ\Psi_\lam^{-1})(\d\mu)=\int_{T_\lam}a(\varphi)(f)v(\Psi_\lam(f))G(\d f)
		\\&=\sum_{k,l=1}^d\int_{T_\lam}\int_{\RR^d}q_{k,l}(x)\partial_k\partial_l\varphi (f(x))v(\Psi_\lam(f))\lam(\d x) G(\d f)\\
		&=\sum_{k,l=1}^d\int_{\P_2}\int_{\RR^d}\rho^\mu_{k,l}(x) \partial_k\partial_l\varphi (x) v(\mu)\mu(\d x)(G\circ\Psi_\lam^{-1})(\d \mu)\\
		&=\int_{\P_2}\Big(\sum_{k,l=1}^d\mu\big(\rho^\mu_{k,l} \partial_k\partial_l\varphi \big) \Big)v(\mu)(G\circ\Psi_\lam^{-1})(\d \mu).
	\end{align*}
	Now, the statement follows from Theorem \ref{thm:thmL}.
\end{proof}

%
%
%
%
%
%

\section{The Fukushima decomposition of ${(\bm\mu_t)}_{t\geq 0}$}\label{sec:Fuku}
\subsection{The square-field operator}\label{sec:SF}
From here on, we simply write $\E:=\E^{G,\lam}$, $L:=L^{G,\lam}$ and $\Lam:=G\circ\Psi_\lam^{-1}$. $\D(\E)$ is a Hilbert space with inner product
\begin{equation*}
	(u,v)\mapsto \E_1(u,v):=\E(u,v)+\int_{\P_2}uv\d\Lam,\quad u,v\in\D(\E).
\end{equation*}
In Proposition \ref{prp:squareF} below, we show the existence of a continuous, bilinear map 
\begin{equation}\label{eq:squareF0}
\Gamma:\D(\E)\times\D(\E)\to L^1(\P_2,\Lam)
\end{equation}
such that
\begin{equation}\label{eq:squareF}
	\int_{\scr P_2} v \Gamma(u,u)\d\Lam= 2\E(u v, u)-\E(u^2,v), \quad u,v\in\D(\E)\cap L^\infty(\scr P_2,\Lam)=:\D(\E)_b.
\end{equation}
$\Gamma$ is called the square-field or carré-du-champ operator of $\E$ (see \cite[Chap.~I]{BH91}) and uniquely characterized by \eqref{eq:squareF}.
Moreover, $\Gamma(u,u)$ is non-negative in $\Lam$-a.e.~sense for $u\in\D(\E)$ and $\Gamma(u,u)\d\Lam$ is called the energy measure.

As a prerequisite to Proposition \ref{prp:squareF}, we explain how the covariance of $G$ acts vector fields over $\P_2$.
We identify two measurable functions $\eta,\xi:\P_2\times\RR^d\to\RR^d$  if 
$\int_{\P_2}\int_{\RR^d}|\eta(\mu,\cdot)-\xi(\mu,\cdot)|_\euc\d\mu\Lam(\d\mu)=0$ and write $\int_{\P_2}^\oplus T_\bullet \d\Lam$ for the resulting set of equivalence classes in
\begin{equation*}
	\Big\{\eta:\P_2\times\RR^d\to\RR^d:\eta\text{ is measurable and } \int_{\P_2}\|\eta(\mu,\cdot)\|_{T_\mu}^2\Lam(\d\mu)<\infty\Big\}.
\end{equation*}
Then, $\int_{\P_2}^\oplus T_\bullet \d\Lam$ is a Hilbert space with inner product
\begin{equation*}
	(\eta,\xi)\,\mapsto \,\int_{\P_2}{\la\eta(\mu,\cdot),\xi(\mu,\cdot)\ra}_{T_\mu}^2\Lam(\d\mu),\qquad \eta,\xi\in \int_{\P_2}^\oplus T_\bullet \d\Lam.
\end{equation*}
With $\vec\Psi_\lam$ as in \eqref{eq:vecPsilam}, we have a natural isometry
\begin{align*}
J:\int_{\P_2}^\oplus T_\bullet \d\Lam\;&\rightarrow\; L^2(T_\lam\times \RR^d,G\times \lam;\RR^d)\\
\eta\;&\mapsto\;\eta\circ\vec\Psi_\lam
\end{align*}
due to 
\begin{multline*}
	\int_{T_\lam\times \RR^d}\big|\eta(\Psi_\lam(f),f(x))\big|_\euc^2\lam(\d x)G(\d f)=\int_{T_\lam\times\RR^d}\big|\eta(\lam\circ f^{-1},x)\big|_\euc^2 (\lam\circ f^{-1})(\d x)G(\d f)\\=
	\int_{\P_2}\|\eta(\mu,\cdot)\|^2_{T_\mu}\Lam(\d\mu).
\end{multline*}
With the covariance operator $A^{-1}$ of $G$, we define a strictly positive definite, bounded, symmetric bilinear form on $\int_{\P_2}^\oplus T_\bullet \d\Lam$,
\begin{equation}\label{eq:defQ}
	(\eta,\xi)\,\mapsto\, \int_{T_\lam} \big\la A^{-1}J\eta(f,\cdot ),J\xi(f,\cdot )\big\ra_{T_\lam}G(\d f) ,\qquad \eta,\xi\in \int_{\P_2}^\oplus T_\bullet \d\Lam.
\end{equation}
In \eqref{eq:defQ}, $J\eta(f,\cdot )$ denotes the function $\RR^d\ni x\mapsto J\eta (f,x)=\eta(\Psi_\lam(f),f(x))\in\RR^d$, which is an element of $T_\lam$ for $G$-a.e.~$f\in T_\lam$,
and $J\xi(f,\cdot )$ analogously. The inner product in \eqref{eq:defQ} decomposes into a family of inner products over $\P_2$, giving rise to a family of Hilbert spaces.
\begin{lem}\label{lem:squareF}
	There exists a family of Hilbert spaces ${(H_\mu)}_{\mu\in \P_2}$ such that:
	\begin{itemize}[leftmargin=*]
	\item[]a) $T_\mu\hookrightarrow H_\mu$ is densely, continuously embedded for $\mu\in\P_2$ and the embedding $T_\mu\hookrightarrow H_\mu$ is a Hilbert-Schmidt operator for $\Lam$-a.e.~$\mu\in \P_2$.
	\item[]b)  $\P_2\ni \mu\mapsto{\la\eta(\mu,\cdot),\xi(\mu,\cdot )\ra}_{H_\mu}\in\RR$ is $\Lam$-integrable for $\eta,\xi\in \int_{\P_2}^\oplus T_\bullet \d\Lam$ and 
	\begin{equation}\label{eq:defH}
		\int_{\P_2}{\la\eta(\mu,\cdot),\xi(\mu,\cdot )\ra}_{H_\mu}\Lam(\d\mu)=\int_{T_\lam} \big\la A^{-1}J\eta(f,\cdot ),J\xi(f,\cdot )\big\ra_{T_\lam}G(\d f).
	\end{equation}
	\end{itemize}
\end{lem}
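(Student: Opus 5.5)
The plan is to disintegrate the bounded operator induced on $\int_{\P_2}^\oplus T_\bullet\,\d\Lam$ by \eqref{eq:defQ} along the fibres $\mu\in\P_2$. Write $\mathcal K:=\int_{\P_2}^\oplus T_\bullet\,\d\Lam$. By the isometry $J$, the form \eqref{eq:defQ} equals $\la \hat Q J\eta,J\xi\ra_{L^2(G\times\lam)}$, where $\hat Q$ acts fibrewise in $f$ as $A^{-1}$ on $L^2(T_\lam\times\RR^d,G\times\lam;\RR^d)\cong L^2(T_\lam,G;T_\lam)$. Hence it is represented on $\mathcal K$ by the bounded, positive, injective operator $Q:=J^*\hat QJ$.

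The first step is to show that $Q$ is \emph{decomposable}, i.e.\ that it commutes with multiplication by every bounded measurable $v:\P_2\to\RR$. Since $J(v\eta)(f,x)=v(\Psi_\lam(f))J\eta(f,x)$ and $v\circ\Psi_\lam$ depends only on $f$, multiplication by $v$ commutes with $\hat Q$, which is fibrewise in $f$. The range of $J$ is invariant under this multiplication, so $JJ^*$, the projection onto $\mathrm{ran}(J)$, commutes with it as well. It follows that $QM_v=M_vQ$.

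Next I would invoke the standard direct-integral theorem: decomposable operators on a measurable field of separable Hilbert spaces are given by measurable fields of operators. This is legitimate because $\P_2$ is Polish and the field $\mu\mapsto T_\mu$ is measurable; a measurable choice of orthonormal frames can be obtained, for instance, by Gram--Schmidt applied to a countable family of continuous test fields. The theorem yields bounded, positive, symmetric operators $Q_\mu$ on $T_\mu$ with $(Q\eta)(\mu)=Q_\mu\eta(\mu,\cdot)$ for $\Lam$-a.e.\ $\mu$.

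I then define $H_\mu$ as the completion of $T_\mu$ with respect to $\la g,h\ra_{H_\mu}:=\la Q_\mu g,h\ra_{T_\mu}$. This requires injectivity of $Q_\mu$ for a.e.\ $\mu$. To obtain it, suppose $Q_\mu$ had a kernel on a set of positive measure; measurable selection would then produce a nonzero $\eta\in\mathcal K$ with $Q\eta=0$, contradicting $\la Q\eta,\eta\ra=\int\la A^{-1}J\eta,J\eta\ra\,\d G>0$. For the $\Lam$-null set of bad $\mu$ I simply put $Q_\mu:=\id$. With this construction the embedding $T_\mu\hookrightarrow H_\mu$ is continuous with norm $\|Q_\mu\|^{1/2}$ and dense, and b) holds by construction: integrability follows from $|\la Q_\mu\eta,\xi\ra_{T_\mu}|\le\|Q_\mu\|\,\|\eta\|_{T_\mu}\|\xi\|_{T_\mu}$ together with $\operatorname{ess\,sup}_\mu\|Q_\mu\|\le\|Q\|\le\|A^{-1}\|$.

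The main obstacle is the Hilbert--Schmidt claim, namely $\tra(Q_\mu)<\infty$ for a.e.\ $\mu$. I would first try to prove $\int_{\P_2}\tra(Q_\mu)\,\Lam(\d\mu)\le\tra(A^{-1})$, since then $\tra Q_\mu<\infty$ a.e., and positivity of $Q_\mu$ makes $Q_\mu^{1/2}$, which is the embedding viewed as an operator into $H_\mu$, Hilbert--Schmidt. To compute this integral, take measurable orthonormal frames $(e_k^\mu)_k$ of $T_\mu$ and write $\tra Q_\mu=\sum_k\la Q(e_k\eins_B),e_k\eins_B\ra$ localized on sets $B$. This reduces the problem to bounding $\int\sum_k\la A^{-1}Je_k(f,\cdot),Je_k(f,\cdot)\ra\,G(\d f)$. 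For each $f$, the functions $(Je_k(f,\cdot))_k=(e_k^{\Psi_\lam(f)}\circ f)_k$ form an orthonormal system in $T_\lam$, because $f$ pushes $\lam$ to $\Psi_\lam(f)$. The inner sum is therefore at most $\tra(A^{-1})$, and integrating against the probability measure $G$ gives the bound $\tra(A^{-1})$.
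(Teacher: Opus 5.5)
Your proposal is correct and follows the paper's proof. Both arguments represent the form \eqref{eq:defQ} by a positive injective operator on $\int_{\P_2}^\oplus T_\bullet\,\d\Lam$, show that it commutes with multiplication by bounded functions of $\mu$, decompose it fibrewise via the direct-integral theorem, complete $T_\mu$ under $\la Q_\mu\cdot,\cdot\ra_{T_\mu}$, and bound $\int_{\P_2}\tra(Q_\mu)\,\Lam(\d\mu)$ by $\tra(A^{-1})$ using that the functions $e_k^{\Psi_\lam(f)}\circ f$ form an orthonormal system in $T_\lam$. The differences are minor: you obtain the commutation through the projection $JJ^*$ rather than by computing the forms directly, and fibrewise injectivity through a measurable selection of a kernel vector rather than the paper's dense-range argument with a countable total family.
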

\begin{proof}
	There is a symmetric, bounded linear operator $K$ on $\int_{\P_2}^\oplus T_\bullet \d\Lam$, determined by the equation
	\begin{equation*}
		\int_{\P_2}{\big\la (K\eta)(\mu,\cdot),\xi(\mu,\cdot )\big\ra}_{T_\mu}\Lam(\d\mu)=\int_{T_\lam} \big\la A^{-1}J\eta(f,\cdot ),J\xi(f,\cdot )\big\ra_{T_\lam}G(\d f)
		,\qquad \eta,\xi\in \int_{\P_2}^\oplus T_\bullet \d\Lam.
	\end{equation*}
	Moreover, this equation implies that $K$ is an injective, positive operator.
	For bounded, measurable  $u:\P_2\to\RR$, we set $u\eta(\mu,x):=u(\mu)\eta(\mu,x)$, $u\xi(\mu,x):=u(\mu)\xi(\mu,x)$ and observe
	\begin{align*}
		&\int_{\P_2}{\big\la (K(u\eta))(\mu,\cdot),\xi(\mu,\cdot )\big\ra}_{T_\mu}\Lam(\d\mu)=\int_{T_\lam} \big\la A^{-1}J(u\eta)(f,\cdot ),J\xi(f,\cdot )\big\ra_{T_\lam}G(\d f)\\
		&=\int_{T_\lam} u(\Psi_\lam(f))\big\la  A^{-1}J\eta(f,\cdot ),J\xi(f,\cdot )\big\ra_{T_\lam}G(\d f)
		=\int_{T_\lam} \big\la  A^{-1}J\eta(f,\cdot ),J(u\xi)(f,\cdot )\big\ra_{T_\lam}G(\d f)\\
		&=\int_{\P_2}{\big\la (K\eta)(\mu,\cdot),(u\xi)(\mu,\cdot )\big\ra}_{T_\mu}\Lam(\d\mu)
		=\int_{\P_2}{\big\la u(\mu)(K\eta)(\mu,\cdot),\xi(\mu,\cdot )\big\ra}_{T_\mu}\Lam(\d\mu).
	\end{align*}
	Hence, $K$ commutes with any multiplication operator of the type $\eta \mapsto u\eta$ for $u$ as above, which
    by virtue of \cite[Chap.~II (§1), Thm.~1]{Dixmier}, is equivalent to the existence of a decomposition ${(K_\mu)}_{\mu\in\P_2}$ (unique in $\Lam$-a.e.~sense):
    \begin{itemize}
    	\item[$\bullet$] $K_\mu$ is a bounded linear operator on $T_\mu$ for $\mu\in\P_2$,
    	\item[$\bullet$] $\P_2\times\RR^d\ni(\mu,x)\mapsto \big(K_\mu\eta(\mu,\cdot)\big)(x)\in\RR^d$ is measurable for $\eta\in \int_{\P_2}^\oplus T_\bullet \d\Lam$,
    	\item[$\bullet$] $(K\eta)(\mu,\cdot)=K_\mu\eta(\mu,\cdot)$ in $T_\mu$ for $\Lam$-a.e.~$\mu\in\P_2$.
    \end{itemize}
    Clearly, $K_\mu$ is a non-negative, symmetric operator for $\Lam$-a.e.~$\mu$, inheriting these properties from $K$. The same can be said regarding injectivity, for which we sketch a proof for the reader's convenience:
    
    We choose a countable, point-separating algebra $\scr C$ of bounded measurable functions $\RR^d\to\RR$ and set 
    \begin{equation*}
    	\mathcal C^{\otimes d}:=\big\{ \RR^d\ni x\mapsto \big(\psi_1(x_1),\dots,\psi_d(x_d)\big)\in\RR^d:\psi_i\in \scr C, 1\leq i\leq d\big\}.
    \end{equation*}
    Since $K$ has a dense range $\scr R(K)$, the closure of $\scr R(K)$ in $\int_{\P_2}^\oplus T_\bullet \d\Lam$ contains every function of the family
    $\big\{\P_2\times \RR^d\ni (\mu,x)\mapsto g(x)$: $g\in \scr C^{\otimes d}\big\}.$
    Hence, for $\Lam$-a.e.~$\mu\in\P_2$, the family $\mathcal C^{\otimes d}$ is contained in the $T_\mu$-closure  of $\scr R(K_\mu)$, the range of $K_\mu$. However, the orthogonal complement of $\mathcal C^{\otimes d}$ in $T_\mu$ equals $\{0\}$ for every $\mu\in\P_2$.
    Thus,  $K_\mu$ is injective for $\Lam$-a.e.~$\mu\in\P_2$.

We define $(H_\mu,\|\cdot\|_{H_\mu})$ as the abstract completion of $T_\mu$ w.r.t.~the norm ${\la K_\mu\,\cdot\,,\,\cdot\,\ra}_{T_\mu}^{1/2}$
whenever $K_\mu$ is injective, positive definite (holds for $\Lam$-a.e.~$\mu$). Otherwise, we set $(H_\mu,\|\cdot\|_{H_\mu}):=(T_\mu,\|\cdot\|_{T_\mu})$. Hence, ${(H_\mu)}_{\mu\in \P_2}$ 
is family of Hilbert spaces as claimed in b),
and $T_\mu\hookrightarrow H_\mu$ is densely and continuously embedded for $\mu\in\P_2$. 

To see that the embeddings are Hilbert-Schmidt, we choose a sequence ${(\eta_n)}_{n\in\NN}$ in
$\int_{\P_2}^\oplus T_\bullet \d\Lam$ such that for $\Lam$-a.e.~$\mu\in \P_2$: 
${(\eta_n(\mu,\cdot))}_{n\in \NN\cap[1,\dim(T_\mu)] }$ is an orthonormal basis of $T_\mu$, where $\dim(T_\mu)\in\NN\cup\{\infty\}$ denotes the dimension of $T_\mu$,
and $\eta_n(\mu,\cdot)=0$ for $n\notin \NN\cap[1,\dim(T_\mu)]$. A sequence ${(\eta_n)}_{n\in\NN}$ with that property exists by \cite[Chap.~II (§1), Prop.~1]{Dixmier}.
Then, 
\begin{align*}
	\sum_{n\in\NN}\int_{\P_2}{\|\eta_n(\mu,\cdot)\|}^2_{H_\mu}\Lam(\d\mu)=\sum_{n\in\NN}\int_{T_\lam} \big\la A^{-1}J\eta_n(f,\cdot ),J\eta_n(f,\cdot )\big\ra_{T_\lam}G(\d f)
	\leq \tra(A^{-1}),
\end{align*}
since for $G$-a.e.~$f\in T_\lam$,
\begin{equation*}
	{\big\la J\eta_n(f,\cdot ),J\eta_m(f,\cdot )\big\ra}_{T_\lam}={\big\la \eta_n(\Psi_\lam(f),\cdot ),\eta_m(\Psi_\lam(f),\cdot )\big\ra}_{T_{\Psi_\lam(f)}}=
	\begin{cases}
		1&\text{ if }m=n\leq \dim(T_{\Psi_\lam(f)}),\\
		0&\text{ else.}
	\end{cases}
\end{equation*}
So, $\sum_{n\in\NN} {\|\eta_n(\mu,\cdot)\|}^2_{H_\mu}$ is finite for $\Lam$-a.e.~$\mu\in\P_2$. This concludes the proof.
\end{proof}

The square-field operator $\Gamma(u,u)$ for $u\in C_b^1(\P_2)$ evaluated at $\mu\in\P_2$ coincides with two-times the squared $H_\mu$-norm of the intrinsic derivative of $u$ at $\mu$.

\begin{prp}\label{prp:squareF}
	\begin{itemize}[leftmargin=*]
	\item[]a) $\E$ admits a square-field  operator $\Gamma$ as in  \eqref{eq:squareF0}, \eqref{eq:squareF}.
	\item[]b) $\Gamma(u,u)(\mu)=2{\|Du(\mu)\|}_{  H_\mu}^2$, $u\in C_b^1(\scr P_2)$,  $\Lam$-a.e.~$\mu\in\P_2$.
	\item[]c) If $\varphi$ and $u$ are as in \eqref{eq:uphi}, then
	$\Gamma(u,u)(\mu)=2{\|\na^\euc\varphi\|}_{ H_\mu}^2$,   $\Lam$-a.e.~$\mu\in\P_2$.
	\end{itemize}
\end{prp}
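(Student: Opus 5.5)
The plan is to define $\Gamma$ first on $C_b^1(\P_2)$ by the formula in b), check \eqref{eq:squareF} there, and then extend to all of $\D(\E)$ by continuity. For $u\in C_b^1(\P_2)$ set $\Gamma(u,u)(\mu):=2\|Du(\mu)\|_{H_\mu}^2$ and define $\Gamma(u,v)$ by polarization.

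Since $Du$ has a jointly continuous version $\widetilde{Du}$, the map $(\mu,x)\mapsto\widetilde{Du}(\mu)(x)$ lies in $\int_{\P_2}^\oplus T_\bullet\d\Lam$. Moreover, by \eqref{eq:chainR}, $J(\widetilde{Du})(f,\cdot)=Du(\Psi_\lam(f))\circ f=\na(u\circ\Psi_\lam)(f)$. Lemma \ref{lem:squareF} b) applied with $\eta=\xi=Du$ then gives
\begin{equation*}
\int_{\P_2}\|Du(\mu)\|_{H_\mu}^2\Lam(\d\mu)=\E(u,u).
\end{equation*}
Replacing $\xi$ by $v\,Du$ for bounded measurable $v$ yields the weighted version
\begin{equation*}
\int v\|Du\|_{H_\bullet}^2\d\Lam=\int_{T_\lam}v(\Psi_\lam(f))\la A^{-1}\na(u\circ\Psi_\lam),\na(u\circ\Psi_\lam)\ra_{T_\lam}\d G.
\end{equation*}

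For $u,v\in C_b^1(\P_2)$ the product rule gives $D(uv)=u\,Dv+v\,Du$ and $D(u^2)=2u\,Du$. Substituting into $2\E(uv,u)-\E(u^2,v)$ via \eqref{eq:preDirichlet}, the cross terms $2\int u\la A^{-1}\na(v\circ\Psi_\lam),\na(u\circ\Psi_\lam)\ra\d G$ cancel. What remains is $2\int v\circ\Psi_\lam\,\la A^{-1}\na(u\circ\Psi_\lam),\na(u\circ\Psi_\lam)\ra\d G$, which by the weighted identity equals $\int v\,\Gamma(u,u)\d\Lam$. Hence \eqref{eq:squareF} holds on $C_b^1(\P_2)$, and $\Gamma(u,u)\ge 0$ there.

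The extension is the main technical step. From the Hilbert space structure of $\|\cdot\|_{H_\mu}$ we get the pointwise Cauchy--Schwarz bound $|\Gamma(u,u)^{1/2}-\Gamma(w,w)^{1/2}|\le\Gamma(u-w,u-w)^{1/2}$. Integrating it gives
\begin{equation*}
\|\Gamma(u,u)-\Gamma(w,w)\|_{L^1(\Lam)}\le 2\,\E(u-w,u-w)^{1/2}\big(\E(u,u)^{1/2}+\E(w,w)^{1/2}\big),
\end{equation*}
so $\Gamma$ extends continuously from the $\E_1$-dense set $C_b^1(\P_2)$ (density by definition of the closure) to $\D(\E)\times\D(\E)\to L^1(\Lam)$. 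To pass \eqref{eq:squareF} to $\D(\E)_b$, approximate $u,v\in\D(\E)_b$ by $C_b^1$ functions that are uniformly bounded; this is possible by composing with a smooth bounded $\chi$ as in the proof of Lemma \ref{lem:closable}, using the Markov property. Both sides of \eqref{eq:squareF} then converge, the right side via the standard estimate $\E(uv,uv)^{1/2}\le\|u\|_\infty\E(v,v)^{1/2}+\|v\|_\infty\E(u,u)^{1/2}$ for Dirichlet forms. Alternatively, one can cite the general result \cite[Chap.~I]{BH91} that a form whose restriction to a core carries such a $\Gamma$ admits a square-field operator. Uniqueness follows from \eqref{eq:squareF} by density of bounded $v$. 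This establishes a) and b).

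For c), $u(\mu)=\mu(\varphi)$ is not in $C_b^1$, but $u\in\D(\E)$ by the proof of Theorem \ref{thm:thmL}, via the approximations $u_n$ and $\tilde u_n$ with $Du_n(\mu)\to\na^\euc\varphi$. I would apply Lemma \ref{lem:squareF} b) directly with $\eta=\na^\euc\varphi$, which lies in $\int^\oplus T_\bullet\d\Lam$ by \eqref{eq:phibounds} and \eqref{eq:Gmom}. I would then show $\Gamma(\tilde u_n-u,\tilde u_n-u)\to0$ in $L^1$ by checking that $D\tilde u_n\to\na^\euc\varphi$ in $\int^\oplus T_\bullet\d\Lam$, using dominated convergence and the boundedness of $K$. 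Continuity of $\Gamma$ and of $\eta\mapsto\|\eta\|_{H_\bullet}^2$ in $L^1$ then gives the formula in c).
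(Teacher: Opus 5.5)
Your argument for a) and b) is essentially the paper's. You use the same cancellation of cross terms in $2\E(uv,u)-\E(u^2,v)$ on $C_b^1(\P_2)$, turn the remainder into $2\int v\|Du\|_{H_\mu}^2\,\d\Lam$ via Lemma~\ref{lem:squareF} with $\xi=v\,Du$, and then extend by density. Your $L^1$-Lipschitz bound for $\Gamma$ and the uniformly bounded approximants only make explicit what the paper leaves to ``approximation'' and \cite{BH91}.

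Part c), however, has a genuine gap. At that stage the identity $\Gamma(w,w)=2\|Dw\|_{H_\mu}^2$ is known only for $w\in C_b^1(\P_2)$. You use it for $w=\tilde u_n-u$, to infer $\Gamma(\tilde u_n-u,\tilde u_n-u)\to0$ from $D\tilde u_n\to\na^\euc\varphi$. You also use it for $w=\tilde u_n$, to identify $\lim_n\Gamma(\tilde u_n,\tilde u_n)$. Neither function lies in $C_b^1(\P_2)$. Clearly $u$ is unbounded. The function $\tilde u_n(\mu)=\mu(\tau_n\circ\varphi)$ is bounded, but its derivative $(\tau_n'\circ\varphi)\na^\euc\varphi$ is in general unbounded: for $d\geq2$ and $\varphi(x)=x_1x_2$ one has $\tau_n'(\varphi)=1$ and $|\na^\euc\varphi|_\euc=|x_1|$ on $\{x_2=0\}$. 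So this step presupposes c) for the very functions it is applied to. You also cannot borrow strong $\E_1$-convergence $\tilde u_n\to u$ from the proof of Theorem~\ref{thm:thmL}. There, \cite[Lem.~I.2.12]{MR92} only turns $\E$-boundedness plus pointwise convergence into $u\in\D(\E)$, which is not enough to invoke continuity of $\Gamma$.

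The repair is a Cauchy argument on approximants that really lie in $C_b^1(\P_2)$, done in two stages as in the paper.

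\emph{First stage.} Take a bounded $C^1$ function $\psi$ with $|\na^\euc\psi|_\euc$ of linear growth, for instance $\psi=\tau_n\circ\varphi$ or $\psi=\tau_n\circ\varphi-\tau_m\circ\varphi$. Put $U_\psi(\mu):=\mu(\psi)$ and $w_k(\mu):=\mu(\kappa(\cdot/k)\psi)$. Since $\kappa(\cdot/k)\psi$ is $C^1$ with compact support, $w_k\in C_b^1(\P_2)$. Hence b) and Lemma~\ref{lem:squareF} give $\int\Gamma(w_k-w_j,w_k-w_j)\,\d\Lam\le 2\|A^{-1}\|_{T_\lam,\text{Op}}\int\|Dw_k-Dw_j\|_{T_\mu}^2\,\Lam(\d\mu)$, and the right-hand side tends to $0$ by dominated convergence. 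Together with $w_k\to U_\psi$ in $L^2(\Lam)$ and closedness, this gives $w_k\to U_\psi$ in $\E_1$. Therefore $\Gamma(U_\psi,U_\psi)=\lim_k 2\|Dw_k\|_{H_\mu}^2=2\|\na^\euc\psi\|_{H_\mu}^2$ in $L^1(\Lam)$.

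\emph{Second stage.} Only now is the formula known for $\tilde u_n$ and for $\tilde u_n-\tilde u_m$. The same Cauchy argument in $n$ then finishes c).

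Alternatively, Remark~\ref{rem:imageForm}, combined with $(\tilde u_n-u)\circ\Psi_\lam\in W^{1,2}(T_\lam,G)$, could justify your energy estimate for $\tilde u_n-u$. The pointwise identity for $\Gamma(\tilde u_n,\tilde u_n)$ would still need a separate proof.
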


\begin{proof}
	Let $u,v\in C_b^1(\scr P_2)$. Note that $JDu=\na (u\circ\Psi_\lam)$ by \eqref{eq:chainR}. We use \eqref{eq:chainR} to obtain
	\begin{align*}
		&2\E^{G,\lam}(u v, u)-\E^{G,\lam}(u^2,v)\\
		&=2\int_{T_\lam}{\big\la \na\big((uv)\circ\Psi_\lam\big),A^{-1}\na(u\circ\Psi_\lam)\big\ra}_{T_\lam}\d G
		-\int_{T_\lam}{\big\la \na(u^2\circ\Psi_\lam),A^{-1}\na(v\circ\Psi_\lam)\big\ra}_{T_\lam}\d G\\
		&=2\int_{T_\lam}v(\Psi_\lam){\big\la \na(u\circ\Psi_\lam),A^{-1}\na(u\circ\Psi_\lam)\big\ra}_{T_\lam}\d G
		+2\int_{T_\lam}u(\Psi_\lam){\big\la \na(v\circ\Psi_\lam),A^{-1}\na(u\circ\Psi_\lam)\big\ra}_{T_\lam}\d G
		\\&\qquad-2\int_{T_\lam}u(\Psi_\lam){\big\la \na(u\circ\Psi_\lam),A^{-1}\na(v\circ\Psi_\lam)\big\ra}_{T_\lam}\d G\\
		&=2\int_{T_\lam}{\big\la J(vDu)(f,\cdot),A^{-1}JDu(f,\cdot)\big\ra}_{T_\lam}G(\d f)\\&=
		2\int_{\P_2}{\big\la v(\mu) Du(\mu),Du(\mu)\big\ra}_{H_\mu}\Lam(\d\mu)
		=2\int_{\P_2}v(\mu){\big\|  Du(\mu)\big\|}^2_{H_\mu}\Lam(\d\mu),
	\end{align*}
	where Lemma \ref{lem:squareF} provides the equality of the second to last and the last line. The identity
	\begin{equation*}
		2\E^{G,\lam}(u v, u)-\E^{G,\lam}(u^2,v)=2\int_{\P_2}v(\mu){\big\|  Du(\mu)\big\|}^2_{H_\mu}\Lam(\d\mu)
	\end{equation*}
	extends to  $u,v\in\D(\E)_b$ by approximation, since $C_b^1(\scr P_2)$ is dense in $(\D(\E),\E_1^{1/2})$. So, \eqref{eq:squareF} with $\Gamma$ as claimed in b) is shown.
	$\Gamma$ extends uniquely to a continuous, bilinear map $\D(\E)\times\D(\E)\to L^1(\P_2,\Lam)$, see \cite[Chap.~I, Prop.~4.1.3]{BH91}.
	This concludes the proof of a), b).
	
	Before we address the proof of c), we infer that due to \eqref{eq:defH}, \eqref{eq:Gmom} and the estimates of \eqref{eq:phibounds},
	\begin{equation*}
		\int_{\P_2}{\|\na^\euc\varphi\|}_{H_\mu}^2\Lam(\d\mu)\leq \|A^{-1}\|_{T_\lam,\text{Op}}\int_{\P_2}{\|\na^\euc\varphi\|}_{T_\mu}^2\Lam(\d\mu)<\infty
	\end{equation*}
	for $\varphi\in C^2(\RR^d)$, $\sup_{x\in\RR^d}|\partial_k\partial_l\varphi(x)|<\infty$, $1\leq k,l\leq d$.
	
	In the remainder of this proof, we  show that c) follows from b) by an analogous approximation  as used in the first part of the  proof of Theorem \ref{thm:thmL}.
	Let $\kappa\in C^1(\RR^d)$ such that $\eins_{[-1,1]^d}(x)\leq \kappa(x)\leq \eins_{[-3,3]^d}(x)$, $|\na^\euc\kappa(x)|_\euc\leq 1$ for $x\in\RR$, and
	${(\tau_n)}_{n\in\NN}\subseteq  C_b^1(\RR)$ be a sequence such that $\sup_{n\in\NN}|\tau_n'(s)|\leq 1$ for $s\in\RR$ and  $\tau_n(s)=s$ for $s\in[-n,n]$.

	If $\varphi$, $u$ are given as in \eqref{eq:uphi} and additionally 
	$\varphi$ is bounded, then
	\begin{align*}
		&\int_{\RR^d}\kappa\big(\tfrac x n\big)\varphi(x)\mu(\d x)=:u_n(\mu)\overset{n\to\infty}{\longrightarrow }u(\mu),\\
		&\Gamma(u_n,u_n)(\mu)=2{\|Du_n(\mu)\|}^2_{H_\mu}= 2{\Big\|\tfrac 1 n\varphi(\cdot)(\na^\euc\kappa)\big(\tfrac \cdot n\big)+\na^\euc\varphi(\cdot)\Big\|}_{H_{\mu}}^2
		\overset{n\to\infty}{\longrightarrow }2{\|\na^\euc\varphi\|}_{H_{\mu}}^2
	\end{align*}
	for $\mu\in\P_2$. Moreover, for $n,m\geq M$ and $M\to\infty$, we have $\int_{\P_2}|u_n-u_m|^2+\Gamma(u_n-u_m,u_n-u_m)\Lam(\d\mu)\to 0$, because
	\begin{align*}
		&\int_{\P_2}|u_n-u_m|^2+\Gamma(u_n-u_m,u_n-u_m)\Lam(\d\mu)=
		\int_{\P_2}\mu\big(\kappa\big(\tfrac \cdot n\big)\varphi(\cdot)-\kappa\big(\tfrac \cdot m\big)\varphi(\cdot)\big)^2\Lam(\d\mu)\\&\qquad+ 2\int_{\P_2}{\Big\|\varphi(\cdot)\Big(\tfrac 1 n(\na^\euc\kappa)\big(\tfrac \cdot n\big)-\tfrac 1 m(\na^\euc\kappa)\big(\tfrac \cdot m\big)\Big)
		+\big(\kappa\big(\tfrac \cdot n\big)-\kappa\big(\tfrac \cdot m\big)\big) \na^\euc\varphi(\cdot)\Big\|}_{H_{\mu}}^2\Lam(\d\mu)
		\\&\leq\int_{\P_2}\mu\big(\eins_{\RR^d\setminus[-M,M]^d}\cdot|\varphi|\big)^2\Lam(\d\mu) 
		+2\|A^{-1}\|_{T_\lam,\text{Op}}\int_{\P_2}\mu\Big(\eins_{\RR^d\setminus[-M,M]^d}\cdot\big(|\varphi|^2+|\na^\euc\varphi|_\euc^2\big)\Big)\Lam(\d\mu), 
	\end{align*}
	where the last inequality holds by choice of $\kappa$ and \eqref{eq:defH}.
	Therefore, in case $\varphi$ is bounded the statement of b) is proven.

	Now, for general $\varphi$, $u$ are given as in \eqref{eq:uphi}, we have
	\begin{align*}
		&\mu(\tau_n\circ\varphi)=:\tilde u_n(\mu)\overset{n\to\infty}{\longrightarrow }u(\mu),\\
&\Gamma(\tilde u_n,\tilde u_n)(\mu)=2{\|(\tau'_n\circ\varphi)\na^\euc\varphi\|}^2_{H_\mu}
\overset{n\to\infty}{\longrightarrow }2{\|\na^\euc\varphi\|}_{H_{\mu}}^2
\end{align*}
	for $\mu\in\P_2$. Moreover, for $n,m\geq M$ and $M\to\infty$, we have $\int_{\P_2}|u_n-u_m|^2+\Gamma(u_n-u_m,u_n-u_m)\Lam(\d\mu)\to 0$, because
	\begin{multline*}
		\int_{\P_2}|u_n-u_m|^2+\Gamma(u_n-u_m,u_n-u_m)\Lam(\d\mu)
		\leq\int_{\P_2}\mu\big((\eins_{\RR\setminus[-M,M]}\circ\varphi)|\varphi|\big)^2\Lam(\d\mu) 
		\\+2\|A^{-1}\|_{T_\lam,\text{Op}}\int_{\P_2}\mu\Big((\eins_{\RR\setminus[-M,M]}\circ\varphi)\big|(\tau_n'\circ\varphi-\tau_m'\circ\varphi)\na^\euc\varphi\big|_\euc^2\Big)\Lam(\d\mu). 
	\end{multline*}
	This concludes the proof.
\end{proof}

\begin{rem}\label{rem:loc}
	\begin{itemize}[leftmargin=*]
		\item[]a) We denote by $\eins_{\P_2}$ the $\Lam$-class $\P_2\to\RR$ which is a.e.~constantly $1$. Then,  $\eins_{\P_2}\in\D(\E)$ and $\E(\eins_{\P_2},u)=0$ for all $u\in\D(\E)$.
		By \eqref{eq:squareF} and the continuity of $\Gamma$, we have  $$2\E(u,v)=\int_{\P_2}\Gamma(u,v)\d\Lam,\qquad u,v\in\D(\E).$$.
		\item[]b) For $\tau,\chi\in C_b^1(\RR)$ such that $\tau'(s)\chi'(s)=0$, $s\in\RR$, and $u\in C_b^1(\P_2)$,  by Proposition \ref{prp:squareF} b) and Remark \ref{rem:loc} a),
		\begin{align*}
			\E(\tau\circ u,\chi\circ u)&=\int_{\P_2}{\big\langle D(\tau\circ u)(\mu),D(\chi\circ u)(\mu)\big\rangle}_{H_\mu}\Lam(\d\mu)
			\\&=\int_{\P_2}\tau'(u(\mu))\chi'(u(\mu)){\big\langle Du(\mu),Du(\mu)\big\rangle}_{H_\mu}\Lam(\d\mu)=0.
		\end{align*}
		Hence, $\E$ is local in the sense of \cite[Sect.~I.5]{BH91}. That notion is known as the strong local property. It implies $\E(u,v)=0$ whenever $u,v\in\D(\E)$ and  
		$|u|\Lam$, $|v|\Lam$ are measures with disjoint topological support on $\P_2$.
	\end{itemize}
\end{rem}

\subsection{Martingale solutions for the generator}
First, we recall some basic notions in the context of Potential Theory for Dirichlet forms and Markov processes.
An increasing sequence ${(F_n)}_{n\in\NN}$ of closed sets in $\scr P_2$ is called $\E$-nest if
\begin{equation*}
	\bigcup_{n\in\NN}\Big\{u\in\D(\E):u(\mu)=0\text{ for }\Lam\text{-a.e.~}\mu\in \scr P_2\setminus F_n\Big\}
\end{equation*} 
is dense in $(\D(\E),\E_1^{1/2})$. A subset $\N\subseteq \bigcap_{n}(\P_2\setminus F_n)$ is referred to as $\E$-exceptional. Any $\E$-exceptional set is  a $\Lam$-nullset, but not vice versa.
A statement depending on a reference point $\mu\in\scr P_2$ is said to hold $\E$-quasi-everywhere (\lq q.e.\rq) if valid for all $\mu\in \scr P_2\setminus \N$ up to some $\E$-exceptional set 
$\N\subset \scr P_2$.
The term $\E$-quasi-continuous applies to a function $u:\scr P_2\to\RR$ which restricts to a continuous function $u|_{F_n}\in C(F_n)$ for all $n\in\NN$.

The Dirichlet form $(\E,\D(\E))$ we are investigating is quasi-regular on $\scr P_2$ (see \cite[Def.~IV.3.1]{MR92}). This follows from 
the quasi-regularity criterion in \cite[Thm.~2.1]{RWW24} and the estimate
\begin{align*}
	\E(u,u)&=
\int_{T_\lam}{\la Du(\Psi_\lam f)\circ f,A^{-1}(Du(\Psi_\lam f)\circ f)\ra}_{T_\lam}G(\d f)\\
	&\leq{ \|A^{-1}\|}_{T_\lam,\text{Op}}\int_{T_\lam}{\big\| Du(\Psi_\lam f)\circ f\big\|}_{T_\lam}^2G(\d f)
	\leq{ \|A^{-1}\|}_{T_\lam,\text{Op}}\sup_{\mu\in\scr P_2} {\big\| Du(\mu)\big\|}_{T_\mu}^2
\end{align*}
for $u\in C_b^1(\scr P_2)$ with $ \|A^{-1}\|_{T_\lam,\text{Op}}$ denoting the operator norm of ${A^{-1}}$.

By the method of regularization worked out in \cite[Chap.~VI]{MR92}, there exists an $\E$-nest ${(F_n)}_{n\in\NN}$ of compact sets together with a locally compact topological space $X$ such that:
\begin{itemize}
	\item[$\bullet$] $E:=\bigcup_{n\in\NN}F_n\subset X$ is densely included.
	\item The trace topologies of $\P_2$ and $X$ coincide on every set $F_n$, $n\in\NN$.
	\item  $\B(E)=\{A\subseteq E:A\in\B(X)\}$, where $\B(\cdot)$ denotes the respective Borel $\sigma$-algebra, and hence $\Lam$ can be taken as a probability measure on $X$ with $\Lam(X\setminus E)=0$. 
	\item  Identifying an element $u\in L^2(X,\Lam)$ with the unique  element of $L^2(\P_2,\Lam)$ which coincides with $u$ in $\Lam$-a.e.-sense on $E$,
	$(\E,\D(\E))$ is a regular Dirichlet form on $L^2(X,\Lam)$, as defined in \cite[Chap.~1]{FOT11}.
	\item $X\setminus E$ is an $\E$-exceptional subset of $X$.
\end{itemize}
The last property ensures that the notion of $\E$-exceptional subsets of $E$ is unambiguous, i.e.~not depending on whether we take $\E$ as a Dirichlet form on $\P_2$ or on $X$.
A set $\N\subset \P_2$ is $\E$-exceptional if and only if $\N\cap E$ is $\E$-exceptional, and the analogue is true for a set $\N\subset X$.

Let ${(T_t)}_{t\geq 0}$ denote the semigroup in $L^2(\P_2,\Lam)$ respectively $L^2(X,\Lam)$ corresponding to $(\E,\D(\E))$ and $\eins$ be the $\Lam$-class which a.e.~equals $1$.
Since $\E(\eins,u)=0$ for all $u\in\D(\E)$, 
we have $T_t\eins=\eins$.
In proper association with $\E$ (see \cite[Thm.~IV.~3.5 \& Chap.~VI]{MR92}), there exists a  right process
(a right-continuous, normal and strong Markov process)
  \begin{equation*}
  \mathbf M=\big(\Omega,\F, {( \bm \mu_t)}_{t\geq 0}, {(\theta_t)}_{t\geq 0}, {(P_\mu)}_{\mu\in\scr P_2}\big)
  \end{equation*}
  with state space $\scr P_2$ and shift operator $\theta_t:\Omega\to\Omega$, such that $E$ is $\mathbf M$-invariant and the restricted process $\mathbf M|_E$ is a Hunt process on $X$ (by trivial extension as defined in \cite[Rem.~IV.~3.2.3(i)]{MR92}).
  We may assume that $\mathbf M$ is $\Lam$-tight special standard (see \cite[Def.~IV.~1.13 \& Thm.~IV.~3.5]{MR92}).
A realization of ${( \bm \mu_t)}_{t\geq 0}$ is denoted by ${( \bm \mu_t^\omega)}_{t\geq 0}$ for $\omega\in\Omega$.
$\mathbf M$ is a conservative diffusion (see \cite[Chap.~V]{MR92} and \cite[Chap.~4]{FOT11}) and we may assume
\begin{equation*}
	[0,\infty)\ni t\mapsto\bm\mu_t^\omega\in\scr P_2\quad \text {is continuous,}\quad P_\mu\text{-a.e.~}\omega\in\Omega,
\end{equation*}
 for  every $\mu\in\scr P_2$.
The transition function 
\begin{equation}\label{eq:tranf}
	p_tu(\mu):=\int_\Omega u(\bm \mu_t^\omega) P_\mu(\d\omega),\quad\text{ q.e.}~\mu\in\scr P_2,\,t\geq 0,
\end{equation}
for  $u\in L^2(\P_2,\Lam)$  yields an $\E$-quasi-continuous $\Lam$-version of $T_tu$  
(see \cite[Sect.~IV.~2]{MR92}).
\eqref{eq:tranf} is unambiguous in the sense that choosing two different $\Lam$-versions of $u$, the corresponding representatives of $p_tu$ coincide for q.e.~$\mu\in\P_2$.

The minimum completed admissible filtration of ${( \bm \mu_t)}_{t\geq 0}$ is denoted by $\{\scr F_t\}_{t\geq 0}$.
A finite additive functional (AF) of $\mathbf M$ is a stochastic process $Y:={(Y_t)}_{t\geq 0}$ on $(\Omega,\F)$ with values in $\RR\cup\{\infty\}$ satisfying the following two conditions:
\begin{itemize}
	\item $Y_t$ is $\scr F_t$-measurable with $\{\scr F_t\}_{t\geq 0}$.
	\item There exists $B\in\scr F$ (defining set) and an $\E$-exceptional set $\N$ such that $\theta_t(B)\subseteq B$ for $t>0$,  $P_\mu(B)=1$ for $\mu\in\P_2\setminus\N$,
	 and for  $\omega\in B$:
	 \begin{itemize}
	 	\item $Y_0(\omega)=0$,
	 	\item $Y_t(\omega)\in\RR$, $Y_{t+s}(\omega)=Y_t(\omega)+Y_s(\theta_t\omega)$ for $s,t\geq 0$,
	 	\item ${(Y_t(\omega))}_{t\geq 0}$ is cadlag.
	 \end{itemize}
\end{itemize}
$Y$ is called continuous if in the previous line \lq cadlag\rq\,can be replaced by \lq continuous\rq. 
Two AF's $Y^{(1)}$, $Y^{(2)}$ are called equivalent if they admit a common defining set $B$ such that $Y^{(1)}_t(\omega)=Y^{(2)}_t(\omega)$, $t\geq0$, $\omega\in B$.
The energy of an AF $Y$ is defined as
\begin{equation*}
	e(Y):=\lim_{t\downarrow 0}\frac{1}{2t}\int_{\P_2}\int_{\Omega}Y_t^2\d P_\mu\Lam(\d\mu)\in[0,\infty].
\end{equation*}
As discussed in \cite[Sect.~5.2 (III)]{FOT11},
\begin{equation}\label{eq:caf}
	Y_t:=\Omega\ni \omega\mapsto \int_0 ^t u({\bm \mu^\omega_s})\d s,\qquad t\geq 0,
\end{equation}
 $u\in L^2(\P_2,\Lam)$, yields a continuous, finite additive functional with $e(Y)=0$ and $$\int_\Omega |Y_t|\d P_\mu<\infty,\qquad t\geq 0,$$ for q.e.~$\mu\in\P_2$.
 Up to equivalence, \eqref{eq:caf} does not depend on the choice of a $\Lam$-version of $u$.

Let $\varphi\in C^2(\RR^d)$ with  $\sup_{x\in\RR^d}|\partial_k\partial_l\varphi(x)|<\infty$, $1\leq k,l\leq d$. 
With the notions of Sections \ref{sec:diffOp} \& \ref{sec:SF}, the following are finite additive functionals of $\mathbf M$:
\begin{align*}
	N^{[\varphi]}_t&:=\int_0^t\int_{\RR^d}\sum_{k=1}^d\Big[\Big(\sum_{l=1}^d\rho^{\bm \mu_s}_{k,l}(x)\partial_k\partial_l\varphi(x)\Big)-x_k\partial_k\varphi(x)\Big]{\bm \mu_s}(\d x)\d s,\\
	M^{[\varphi]}_t&:=\int_{\RR^d}\varphi\d \bm \mu_t-\int_{\RR^d}\varphi \d \bm\mu_0-N^{[\varphi]}_t,\\
	Z^{[\varphi]}_t&:=2\int_0^t{\|\na^\euc\varphi\|}_{H_{\bm \mu_s}}^2\d s,\qquad\qquad\qquad t\geq 0.
\end{align*}


\begin{thm}\label{thm:Fuku} Let $\varphi\in C^2(\RR^d)$,  $\sup_{x\in\RR^d}|\partial_k\partial_l\varphi(x)|<\infty$, $1\leq k,l\leq d$.
	
	 ${(M_t^{[\varphi]})}_{t\geq 0}$
	is a square-integrable $P_\mu$-martingale w.r.t.~${(\F_t)}_{t\geq 0}$ for q.e.~$\mu\in\P_2$, and its quadratic variation is equivalent to ${(Z_t^{[\varphi]})}_{t\geq 0}$.
\end{thm}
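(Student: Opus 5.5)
The plan is to apply Fukushima's decomposition theorem for quasi-regular Dirichlet forms, in the form of \cite[Thm.~5.2.2 \& Thm.~5.2.5]{FOT11} transferred via the regularization discussed above, to $u:\mu\mapsto\mu(\varphi)$ as in \eqref{eq:uphi}. Theorem \ref{thm:thmL} (with $m=1$, $F=\id_\RR$, suitably localized since $F$ then has bounded derivatives) gives $u\in\D(L)\subset\D(\E)$. For $u\in\D(L)$ the Fukushima decomposition reads $\tilde u(\bm\mu_t)-\tilde u(\bm\mu_0)=M^{[u]}_t+N^{[u]}_t$, where $\tilde u$ is a quasi-continuous version, $M^{[u]}$ is a martingale additive functional of finite energy, and $N^{[u]}_t=\int_0^tLu(\bm\mu_s)\,\d s$ is a continuous additive functional of zero energy \cite[Thm.~5.2.4]{FOT11}.

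\textbf{Step 1: identifying the terms.} Since $u$ is continuous on $\P_2$ (by \eqref{eq:phibounds} and continuity of the second moment in $\WW_2$), $u$ is its own quasi-continuous version, so the left side equals $\int\varphi\,\d\bm\mu_t-\int\varphi\,\d\bm\mu_0$. By Proposition \ref{prp:diffO}, $Lu(\mu)=\mu(\scr L^\mu_{G,\lam}\varphi)$ for $\Lam$-a.e.~$\mu$. Because the additive functional \eqref{eq:caf} depends only on the $\Lam$-class of the integrand (up to equivalence), $N^{[u]}$ is equivalent to $N^{[\varphi]}$, and hence $M^{[u]}$ is equivalent to $M^{[\varphi]}$.

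\textbf{Step 2: martingale property.} The theory gives that $M^{[u]}$ is a square-integrable $P_\mu$-martingale for q.e.~$\mu$, with $E_\mu[(M_t^{[u]})^2]<\infty$. Here we use that $\mathbf M$ is conservative and that the decomposition holds on the $\mathbf M$-invariant set $E$ outside an exceptional set. Concretely, $E_\mu[(M^{[u]}_t)^2]=E_\mu[\langle M^{[u]}\rangle_t]$, and its $\Lam$-integral is $t\int\Gamma(u,u)\,\d\Lam<\infty$ via the energy measure.

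\textbf{Step 3: quadratic variation.} By \cite[Thm.~5.2.3]{FOT11}, the Revuz measure of $\langle M^{[u]}\rangle$ is the energy measure $\mu_{\langle u\rangle}$. By the strong locality in Remark \ref{rem:loc}, together with the existence of the carré du champ (Proposition \ref{prp:squareF} a)), this energy measure is absolutely continuous: $\mu_{\langle u\rangle}=\Gamma(u,u)\,\d\Lam$, up to the normalization $2\E(u,v)=\int\Gamma(u,v)\,\d\Lam$ used in Remark \ref{rem:loc} a). The positive continuous additive functional with Revuz measure $h\,\d\Lam$, for $h\in L^1(\Lam)$ nonnegative, is $\int_0^th(\bm\mu_s)\,\d s$. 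Proposition \ref{prp:squareF} c) then yields $\langle M^{[\varphi]}\rangle_t=2\int_0^t\|\na^\euc\varphi\|^2_{H_{\bm\mu_s}}\,\d s=Z^{[\varphi]}_t$.

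\textbf{Main obstacle.} I expect the hardest part to be matching the factor $2$ conventions between the paper's $\Gamma$ and FOT's energy measure $\mu_{\langle u\rangle}$, where $\mu_{\langle u\rangle}(X)=2\E(u,u)$. One also has to justify working with the regular representation on $X$ and pulling everything back to $\P_2$ outside an exceptional set. The fact that $u$ is unbounded, so that $\Gamma(u,u)$ must be taken via the continuous extension of $\Gamma$, should be handled by approximating with the truncations $\mu\mapsto\mu(\tau_n\circ\varphi)$ used in the proof of Proposition \ref{prp:squareF}. These truncations converge in $\E_1$, so by \cite[Lem.~5.2.1/Thm.~5.2.2]{FOT11} the martingale parts converge in energy, and the quadratic variations converge as well.
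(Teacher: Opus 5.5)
Your proposal is correct and matches the paper's proof. Both arguments use the Fukushima decomposition for $u(\mu)=\mu(\varphi)\in\D(L)$, identify the zero-energy part with $N^{[\varphi]}$ via Proposition \ref{prp:diffO}, and obtain $\langle M^{[\varphi]}\rangle$ from the Revuz correspondence together with the energy measure $\Gamma(u,u)\Lam$ and Proposition \ref{prp:squareF} c). The factor-$2$ issue you flag is immediate: \eqref{eq:squareF} defines $\Gamma$ by exactly the identity $2\E(uv,u)-\E(u^2,v)$ that defines FOT's energy measure, so $\mu_{\langle u\rangle}=\Gamma(u,u)\Lam$ with no extra constant, and no localization is needed since $F=\id_\RR$ already satisfies the bounds.
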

\begin{proof}
	Let $u(\mu):=\mu(\varphi)$. Due to Proposition \ref{prp:diffO} it holds
	\begin{equation*}
		\int_{\Omega}N_t^{[\varphi]}\d P_\mu=T_tu(\mu)-u(\mu),\qquad t\geq0, \,\Lam\text{-a.e.~}\mu\in\P_2.
	\end{equation*}
	By \cite[Thm.'s 5.2.2 \& 5.2.4]{FOT11} and the discussion in \cite[Sect.~5.2 (III)]{FOT11}, ${(M_t^{[\varphi]})}_{t\geq 0}$ is an AF, which
	 for q.e.$~\mu$ is a square-integrable $P_\mu$-martingale w.r.t.~${(\F_t)}_{t\geq 0}$.
	 Moreover, by \cite[Thm.~5.2.3]{FOT11} the Revuz measure of the quadratic variation process of ${(M_t^{[\varphi]})}_{t\geq 0}$ coincides with the energy measure 
	 $\Gamma(u,u)\Lam$. Now the statement follows, since the Revuz measure uniquely determines an equivalence class of positive, continuous additive functionals  
	 (see \cite[Thm.'s~5.1.3.~\& 5.1.4]{FOT11})
	 and  $\Gamma(u,u)(\mu)=2{\|\na^\euc\varphi\|}_{H_{ \mu}}^2$, $\Lam$-a.e.~$\mu\in\P_2$ by Prop.~\ref{prp:squareF}.
\end{proof}

\begin{cor}\label{cor:secmomProcess}
	Theorem \ref{thm:Fuku} applies to $\varphi(x):=|x|_\euc^2$, $x\in\RR^d$, in which case, due to 
	Remark \ref{rem:trace}
	we obtain that
	\begin{equation*}
		{|\bm\mu_t|}_2^2-{|\bm\mu_0|}_2^2-2\tra(A^{-1})t+2\int_0^t{|\bm\mu_s|}_2^2\d s,
	\end{equation*}
	is a square-integrable $P_\mu$-martingale w.r.t.~${(\F_t)}_{t\geq 0}$ for q.e.~$\mu\in\P_2$, and its quadratic variation is equivalent to
	\begin{equation*}
		8\int_0^t{\|\id_{\RR^d}\|}_{H_{\bm \mu_s}}^2\d s,\qquad\qquad t\geq 0.
	\end{equation*}
\end{cor}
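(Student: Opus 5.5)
This is a direct specialization of Theorem \ref{thm:Fuku} to $\varphi(x):=|x|_\euc^2=\sum_{k=1}^d\pi_k^2$. The plan is to verify the hypothesis, then simplify the drift functional $N^{[\varphi]}$ and the quadratic variation $Z^{[\varphi]}$ for this $\varphi$.

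\emph{Hypothesis and derivatives.} We have $\varphi\in C^2(\RR^d)$, $\partial_k\varphi(x)=2x_k$ and $\partial_k\partial_l\varphi=2\delta_{k,l}$. The second derivatives are bounded, so Theorem \ref{thm:Fuku} applies. It gives that $M^{[\varphi]}$ is a square-integrable $P_\mu$-martingale for q.e.~$\mu$, with quadratic variation equivalent to $Z^{[\varphi]}$. Moreover $\int_{\RR^d}\varphi\,\d\bm\mu_t=|\bm\mu_t|_2^2$.

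\emph{Drift.} Inserting the derivatives into the definition of $N^{[\varphi]}$ gives
\begin{equation*}
N^{[\varphi]}_t=\int_0^t\Big(2\,\bm\mu_s\big(\tra(\rho^{\bm\mu_s}_{\cdot,\cdot})\big)-2|\bm\mu_s|_2^2\Big)\d s .
\end{equation*}
By Remark \ref{rem:trace}, $\mu(\tra(\rho^\mu_{\cdot,\cdot}))=\tra(A^{-1})$ for $\Lam$-a.e.~$\mu$. Call the exceptional set $\N_0$; it is only a $\Lam$-nullset, not necessarily $\E$-exceptional.

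The main point is therefore to replace the integrand $s\mapsto \bm\mu_s(\tra(\rho^{\bm\mu_s}))$ by the constant $\tra(A^{-1})$ inside the time integral without changing the additive functional. For this I will use the fact recalled around \eqref{eq:caf}: the functional $\int_0^t u(\bm\mu_s)\,\d s$ depends, up to equivalence, only on the $\Lam$-class of $u$. This holds because the process spends zero Lebesgue time in a $\Lam$-nullset $P_\mu$-a.s.\ for q.e.~$\mu$, by absolute continuity of the resolvent with respect to $\Lam$. Applying this with $u(\mu)=\mu(\tra(\rho^\mu_{\cdot,\cdot}))$, which lies in $L^1(\Lam)$ and can be localized if one insists on $L^2$, shows that $N^{[\varphi]}$ is equivalent to
\begin{equation*}
2\tra(A^{-1})t-2\int_0^t|\bm\mu_s|_2^2\,\d s .
\end{equation*}
Equivalent AFs agree on a common defining set. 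Hence the martingale property and the quadratic variation transfer to
\begin{equation*}
|\bm\mu_t|_2^2-|\bm\mu_0|_2^2-2\tra(A^{-1})t+2\int_0^t|\bm\mu_s|_2^2\,\d s .
\end{equation*}
This is consistent with Corollary \ref{cor:normSq}, where $L_{G,\lam}(|\cdot|_2^2)=-2|\cdot|_2^2+2\tra(A^{-1})$ has a continuous representative. That representative could alternatively be used directly in the proof of Theorem \ref{thm:Fuku}, via $\int_\Omega N_t\,\d P_\mu=T_tu-u$.

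\emph{Quadratic variation.} Here $\na^\euc\varphi=2\,\id_{\RR^d}$, so
\begin{equation*}
Z^{[\varphi]}_t=2\int_0^t\|2\,\id_{\RR^d}\|_{H_{\bm\mu_s}}^2\,\d s=8\int_0^t\|\id_{\RR^d}\|^2_{H_{\bm\mu_s}}\,\d s,
\end{equation*}
by homogeneity of the norm.

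I expect only the null-set replacement in the drift step to need care; the remaining steps are direct substitutions.
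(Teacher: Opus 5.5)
Your proposal is correct and follows the paper's argument, which is this specialization of Theorem \ref{thm:Fuku}: $\partial_k\partial_l\varphi=2\delta_{k,l}$ turns the drift into $2\,\bm\mu_s(\tra(\rho^{\bm\mu_s}_{\cdot,\cdot}))-2|\bm\mu_s|_2^2$, Remark \ref{rem:trace} replaces the first term by $2\tra(A^{-1})$, and $\na^{\euc}\varphi=2\,\id_{\RR^d}$ gives the factor $8$. The paper leaves the $\Lam$-null-set replacement implicit in its remark that \eqref{eq:caf} depends only on the $\Lam$-class of the integrand, so your explicit treatment fills that in; since that class is the constant $\tra(A^{-1})$, your detour through $L^1(\Lam)$ and localization is unnecessary.
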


\section*{Funding}
Simon Wittmann extends his sincere thanks to the Research Center for Nonlinear Analysis at the Hong Kong Polytechnic University
for its financial support.

\section*{Acknowledgements}

The authors would like to thank Feng-Yu Wang and Panpan Ren for the stimulating discussions and valuable suggestions, which have been a source of inspiration and a valuable feedback to us.

\end{document}